\documentclass[11pt,a4paper,fleqn]{article}

\usepackage{graphicx}
\usepackage{ifthen}
\usepackage{makeidx}
\usepackage{amsfonts}   
\usepackage{theorem}
\usepackage{psfrag}
\usepackage[square,numbers]{natbib}
\usepackage{amssymb,amsmath,euscript,mathrsfs}
\usepackage[Algorithm,ruled]{algorithm}
\usepackage[misc]{ifsym}

\usepackage{url}
\usepackage{hyperref}

\hypersetup{
	colorlinks=true, 
	linkcolor=blue,  
	urlcolor=magenta,   
	citecolor=blue,  
}

\usepackage{caption}  
\usepackage{subcaption} 

\usepackage{xcolor}
\definecolor{Zgris}{rgb}{0.87,0.85,0.85}

\usepackage{array}
\usepackage{epstopdf}
\usepackage{enumerate}

\newcommand{\RR}{{\textnormal{\sf I}}\!{\textnormal{\sf R}}}
\newcommand{\Rn}[1]{\ensuremath{\RR^{#1}}}

\newcommand{\vect}[1]{\boldsymbol{\mathrm{#1}}}
\newcommand{\matr}[1]{\mathsf{#1}}
\newcommand{\tran}[1]{#1^{\textnormal{\sf{T}}}}

\newcommand{\ru}{v}

\newcommand{\enk}{n}
\newcommand{\en}{\vect{\enk}}
\newcommand{\err}{\vect{\epsilon}}

\newtheorem{Lemma}{Lemma}
\newtheorem{Theorem}{Theorem}
\newtheorem{Definition}{Definition}
\newtheorem{Corollary}{Corollary}

\newtheorem{Example}{Example}

\newtheorem{Assumption}{Assumption}
\newtheorem{Remark}{Remark}

\newcommand{\pk}{u}
\newcommand{\p}{\vect{\pk}}

\newcommand{\np}{n_\pk}

\newenvironment{proof}
	{\begin{sloppypar}{\bf Proof.}}
	{\hspace*{\fill}$\Box$
	\end{sloppypar}}

\pagestyle{plain}

\hoffset 0mm \voffset -20mm \oddsidemargin 0mm \evensidemargin
0mm \textwidth 160mm \textheight 245mm

\begin{document}

\title{Interpolation-Based Gradient-Error Bounds for Use in Derivative-Free Optimization of Noisy Functions}

\author{\bf Alejandro G. Marchetti$^{(1,2)}$, Dominique Bonvin$^{(3)}$\\ \\
   $^{(1)}$CIFASIS (CONICET - Universidad Nacional de Rosario),\\ S2000EZP Rosario, Argentina. \\
   \Letter$\ $ {\texttt marchetti\makeatletter @cifasis-conicet.gov.ar} \\ \\
   $^{(2)}$Facultad de Ciencias Exactas, Ingeniería y Agrimensura (FCEIA),\\ Universidad Nacional de Rosario (UNR), Rosario, S200BTP, Argentina. \\ \\
   $^{(3)}$Laboratoire d'Automatique, Ecole Polytechnique F\'ed\'erale de Lausanne,\\ CH-1015 Lausanne, Switzerland.\\ 
   {\texttt dominique.bonvin\makeatletter @epfl.ch} \\ \\}

\date{}  
\maketitle

\begin{abstract}
	In this paper, we analyze the accuracy of gradient estimates obtained by linear interpolation when the underlying function is subject to bounded measurement noise. The total gradient error is decomposed into a deterministic component—arising from the interpolation (finite-difference) approximation—and a stochastic component due to noise. Various upper bounds for both error components are derived and compared through several illustrative examples.
	Our comparative study reveals that strict deterministic bounds, including those commonly used in derivative-free optimization (DFO), tend to be overly conservative. To address this, we propose approximate gradient error bounds that aim to upper bound the gradient error norm more realistically, without the excessive conservatism of classical bounds. Finally, drawing inspiration from dual real-time optimization strategies, we present a DFO scheme based on sequential programming, where the approximate gradient error bounds are enforced as constraints within the optimization problem.\\

\noindent Keywords: Gradient estimation, error bound, noisy function, derivative-free optimization.\\

\noindent Mathematical Subject Classification: 90C30, 90C56, 65K05, 65G99
\end{abstract}

\section{Introduction}
\label{sec:Intro}

Derivative-free optimization (DFO) techniques are employed in scenarios where the objective function (and possibly the constraints) of the optimization problem lack explicit derivatives, or where obtaining derivatives is computationally or economically expensive, or inaccurate due to noise in function evaluations. Among the most popular DFO techniques are direct search methods and interpolation-based trust-region methods (see \cite{Conn:2009,Rios:2013} and references therein). 

Simplex gradients are computed by linear interpolation using function values at $n_u+1$ well-poised sample points in $\mathbb{R}^{n_u}$. These gradients are commonly used in DFO algorithms, including direct search methods such as the Nelder-Mead algorithm (see Chapter~8 in \cite{Conn:2009}) and line-search strategies based on simplex derivatives (see Chapter~9 in \cite{Conn:2009}). 

The {\it generalized simplex gradient} extends the definition of the simplex gradient to the {\it underdetermined} and {\it overdetermined} cases using fewer or more than $n_u+1$ points, respectively \cite{Custodio:2008,Regis:2015}. Variants such as the {\it centered simplex gradient} \cite{Kelley:1999} and the {\it generalized centered simplex gradient} \cite{Hare:2020} have also been proposed to improve approximation accuracy.
 
This paper focuses on the {\it determined} simplex gradient, i.e., the gradient of the unique linear function that interpolates a well-poised set of $n_u+1$ sample points. In this case, the simplex gradient is invariant to the ordering of the sample points and the linear model does not depend on the choice of the reference point from among the sample points. 

Our interest in the simplex gradient is motivated by its role in dual modifier-adaptation (MA) algorithms for real-time optimization (RTO) \cite{Marchetti:2010,Marchetti:2013a}.
To enhance industrial process operations, RTO repeatedly refines a first-principles model using real-time plant data. This updated model is then re-optimized to determine new input variables for the plant. Each RTO iteration involves intensive computations, such as solving nonlinear parameter estimation and economic optimization problems. While the underlying models often contain hundreds or thousands of variables, the number of independent manipulated variables—those directly controlled for optimization—is typically small, ranging from 2 to 10 across many relevant applications. Consequently, the scalability of RTO algorithms concerning these manipulated variables is generally not a major concern. Instead, the main focus is on maximizing economic profit (or minimizing cost), requiring as few experimental runs as possible, and guaranteeing constraint satisfaction. Given the inherent inaccuracies of most models, modifier-adaptation (MA) schemes refine the model by introducing first-order correction terms to its predicted objective and constraint functions \cite{Marchetti:09,Marchetti:2016}. This crucial step corrects the model's gradient predictions, allowing for convergence to the true plant optimum. Specifically, dual MA schemes calculate the simplex gradient based on data collected from the current and previous operating points \cite{Marchetti:2010,Marchetti:2013a}. They ensure the accuracy of this gradient by incorporating a {\it duality constraint} into the optimization problem, which specifically accounts for the gradient estimated from the subsequent simplex.

DFO schemes for noisy functions and MA schemes for RTO share significant similarities. Both are tailored to deal with expensive and noisy function evaluations, necessitating careful attention to the accuracy of gradient estimates. 
The total gradient error can be decomposed into a truncation error, arising from finite-difference approximation, and a measurement noise error \cite{Brekelmans:05}. Consequently, an upper bound on the gradient-error norm can be established by independently bounding these two components.

For the truncation error in the absence of noise, Conn et al. \cite{Conn:2009} (see Theorem 2.11 in \cite{Conn:2009}) provide an error bound for the simplex gradient, assuming Lipschitz continuity of the function's gradient. 
This bound, which we shall denote the {\it delta bound}, is currently the established error bound for the simplex gradient in the literature of derivative-free optimization \cite{Regis:2015,Berahas:2022,Jarry:2023}. 

In the context of dual RTO, a measure of gradient accuracy was proposed in \cite{Marchetti:2010} to control the simplex gradient error due to truncation. This measure, which we shall denote the {\it radial bound}, was used in \cite{Marchetti:2010} to develop a dual MA algorithm. The radial bound pays attention to gradient accuracy at a reference point that belongs to the set of interpolation points. An alternative approximate bound on the gradient error due to truncation that pays attention to gradient accuracy at at least one point that belongs to the convex hull determined by the interpolation points was proposed in \cite{Marchetti:2013a}. This bound, which we shall denote the {\it simplex bound}, was applied as a duality constraint in a dual MA scheme.

The impact of measurement noise on gradient estimation was analyzed in \cite{Brekelmans:05} for various gradient estimation schemes. Assuming the additive noise terms are independent and identically distributed (iid) with zero mean and a given variance, the expected value and variance of the gradient error norm is evaluated in \cite{Brekelmans:05} for forward finite differencing, central finite differencing, replicated central finite differencing, and the Plackett-Burman scheme. Additionally, \cite{More:2012} derives lower and upper bounds on the expected squared directional derivative error under iid random noise assumptions over an interval.
Turning to scenarios with bounded noise, Berahas et al. \cite{Berahas:2022} recently proposed an upper bound on the norm of the gradient error attributed to measurement noise, which we designate the {\it conditioning bound}. It is important to note, however, that the least upper bound for this specific error was previously established in \cite{Marchetti:2010}, which we refer to as the {\it l-min bound}.

In this paper, we derive and compare several gradient error bounds for the simplex gradient. We demonstrate that the radial bound provides an upper bound for the projection of the gradient error along the directions defined by the simplex vertices relative to a reference vertex. When these directions are orthogonal, the radial bound offers a strict upper bound on the norm of the truncation-induced gradient error. Furthermore, we introduce a novel strict upper bound on the truncation gradient error, termed the {\it square column bound}, and show it to be less conservative than the delta bound. Notably, the delta, radial, and square column bounds coincide in the specific case of forward finite differences.

Through comparative examples, we illustrate that strict bounds like the delta and square column bounds can often become overly conservative. To address this, we introduce the concept of {\it approximate gradient error bounds}, a category into which the radial and simplex bounds fall. We also extend the definition of the radial bound to evaluate gradient accuracy at non-vertex points within the simplex's interior. This {\it extended radial bound} allows for a more rigorous formalization of the simplex bound's definition.

The paper also revisits the l-min bound for bounding the gradient error  due to measurement noise and presents a refined proof. Our analysis shows that the l-min bound is less conservative than the conditioning bound, except in the special case of forward finite differences, where both bounds coincide. 

Finally, we introduce a sequential quadratic DFO scheme tailored for optimizing noisy functions in unconstrained settings. Our approach integrates the approximate gradient-error bounds as duality constraints within the optimization problem formulation. Through the implementation of this algorithm on two numerical examples, we demonstrate the effectiveness and versatility of employing duality constraints in the context of DFO for noisy functions.

The subsequent sections of the paper are structured as follows: Section~\ref{sec:preliminaries} provides preliminary introductory material. Section~\ref{sec:errorbounds} presents and derives the various gradient-error bounds. The concept of approximate gradient error bounds is introduced in Section~\ref{sec:approx}. The radial and simplex bounds are put into this category. In Section~\ref{sec:duality}, a comparative analysis of key gradient-error bounds is conducted in the two-dimensional case. Section~\ref{sec:DFOduality} introduces the proposed DFO algorithm with duality constraints. Finally, Section~\ref{sec:Conclusions} provides concluding remarks.

\section{Preliminaries}
\label{sec:preliminaries}

\subsection{Notation}

A column vector is denoted by a lower-case letter such as $\p$ and its elements are $u_i$, $i=1,\dots,n_u$. If the vector is denoted $\p_k$, then its elements are $u_{k,i}$, $i=1,\dots,n_u$. 
The $j$th unit vector, denoted as $\vect{e}_j$, is the vector with a length of 1 that points along the $j$th coordinate axis.
A matrix is denoted by an upper-case letter such as $\vect{U}$ and its elements are $U_{i,j}$. The gradient of a function $f(\p)$, denoted $\nabla f(\p)$, is a column vector. The identity matrix in $\mathbb{R}^{n_u\times n_u}$ is denoted $\vect{I}_{n_u}$. A diagonal matrix $\vect{D}\in\mathbb{R}^{n_u\times n_u}$ can be written as $\vect{D}=\text{diag}\{D_{1,1},D_{2,2},\dots,D_{n_u,n_u}\}$. The norm $\|\cdot\|$ denotes the Euclidean norm of a vector, or the spectral norm of a matrix (natural norm induced by the Euclidean norm).

\subsection{Assumptions and Definitions}

Gradient values depend on the order of magnitude of the input variables $\p$. It is assumed throughout that all the input variables $\p$ are of the same order of magnitude, which can be achieved via scaling. For example, if the input variable $\pk_i$ remains within the interval $[\pk_i^\text{min},\pk_i^\text{max}]$, it can be scaled as $\pk_i^{\text{scaled}}=(\pk_i-\pk_i^\text{min})/(\pk_i^\text{max}-\pk_i^\text{min})$. For notational simplicity, the superscript indicating a scaled variable will be omitted in the sequel.
\begin{Assumption}[Scaled Input Variables] All input variables in $\p$ are of the same order of magnitude.
\end{Assumption}

The analysis is carried out for a noisy function, for which only noisy evaluations are available,
\begin{align}
	& \tilde{f}(\p) = f(\p) + \ru(\p),  \label{eq:noisyfun}
\end{align}
where $\ru$ represents the measurement noise, which accounts for possible inaccuracies in the observed values of $f(\p)$.
We assume that the noise $\ru(\p)$ is bounded for all $\p\in\mathbb{R}^{n_u}$.

\begin{Assumption}[Bounded Noise] There exists a constant $\delta\geq 0$ such that $|\ru(\p)| \leq \delta$, for all $\p\in\mathbb{R}^{n_u}$.
	\label{assum:boundednoise}
\end{Assumption}

Furthermore, we assume that $\delta$ is known. If the noise is stochastic, then, based on a statistical description of $\ru$, $\delta$ could be selected by considering a desired confidence interval.\\

We assume that the function $f$ is twice continuously differentiable and that the gradient of $f$ is Lipschitz continuous on a convex set $\mathcal{Q}\subset \mathbb{R}^{n_u}$.

\begin{Definition}[Lipschitz Continuous Gradient]
	Let $\mathcal{Q}$ be a subset of $\mathbb{R}^{n_u}$ and let $f(\p)$ be a function defined on $\mathcal{Q}$. The gradient $\nabla f(\p)$ is Lipschitz continuous on $\mathcal{Q}$ with constant $L\geq 0$ if
	\begin{align}
		& \bigl\|\nabla f(\p_a) - \nabla f(\p_b)\bigr\| \leq L\|\p_a - \p_b\|,\ \ \text{\rm for all}\ \p_a,\p_b \in\mathcal{Q}. \label{eq:lip_Hf}
	\end{align}
\end{Definition}

\begin{Assumption}[Differentiability] The function $f(\p)$ is twice continuously differentiable on $\mathcal{Q}\subset \mathbb{R}^{n_u}$. \label{assum:differ}
\end{Assumption} 

\begin{Assumption}[Lipschitz Continuity of the Gradient of $f$] The gradient $\nabla f(\p)$ is Lipschitz continuous on a convex set $\mathcal{Q}\subset \mathbb{R}^{n_u}$.
	\label{assum:Lipschitz}
\end{Assumption}

We assume that a sample set $\mathcal{U} = \{\p_0,\ \p_1,\ \dots,\ \p_{n_u}\}$ in $\mathbb{R}^{n_u}$, comprising $n_u+1$ sample points in a convex set $\mathcal{Q}\subset \mathbb{R}^{n_u}$, is available for linear interpolation.

\begin{Definition}[Poisedness for Linear Intepolation]
	The set $\mathcal{U} = \{\p_0,\ \p_1,\ \dots,\ \p_{n_u}\}$ is poised for linear interpolation in $\mathbb{R}^{n_u}$ if the matrix
	\begin{align} 
		&\vect{U} := [\begin{array}{cccc}
			\p_1-\p_0, & \p_2-\p_0,& \dots, & \p_{n_u}-\p_0 
		\end{array}] \in\mathbb{R}^{{n_u}\times{n_u}}, \label{eq:U}
	\end{align}
	is nonsingular.
\end{Definition}

Poisedness of the sample set is required for linear interpolation.

\begin{Assumption}[Poisedness of the Sample Set] The set $\mathcal{U} = \{\p_0,\ \p_1,\ \dots,\ \p_{n_u}\}\subset\mathcal{Q}$ is poised for linear interpolation in $\mathbb{R}^{n_u}$.
	\label{assum:Poisedness}
\end{Assumption}

The assumption that $\mathcal{Q}$ is convex ensures that the convex hull determined by the sample points is included in $\mathcal{Q}$. In other words, $\{\p_0,\ \p_1,\ \dots,\ \p_{n_u}\}\subset\mathcal{Q}$ implies $\text{conv}\{\p_0,\ \p_1,\ \dots,\ \p_{n_u}\}\subset\mathcal{Q}$. Alternatively, we could have directly assumed that $\text{conv}\{\p_0,\ \p_1,\ \dots,\ \p_{n_u}\}\subset\mathcal{Q}$.

\subsection{Important Implications}

The result in the following lemma holds for functions with Lipschitz continuous gradient.
\begin{Lemma}\label{lemma:1}
	If the gradient of $f$ is Lipschitz continuous on a convex set $\mathcal{Q}\subset \mathbb{R}^{n_u}$, then, for all $\p_a,\p_b\in \mathcal{Q}$,
	\begin{align}
		& \bigl|f(\p_a) - f(\p_b) - \tran{\nabla f(\p_b)}(\p_a - \p_b) \bigr| \leq \frac{L}{2} \|\p_a - \p_b\|^2. \label{eq:lemma1}
	\end{align}
\end{Lemma}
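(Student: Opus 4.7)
The plan is to prove this standard result via the fundamental theorem of calculus applied along the line segment connecting $\p_b$ and $\p_a$, followed by the Cauchy–Schwarz inequality and the Lipschitz bound. The convexity of $\mathcal{Q}$ is crucial here: it guarantees that the segment $\{\p_b + t(\p_a - \p_b) : t\in[0,1]\}$ lies entirely in $\mathcal{Q}$, so that \eqref{eq:lip_Hf} can be invoked at every point of the segment.

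First I would parametrize the segment by $\p(t) = \p_b + t(\p_a - \p_b)$ for $t\in[0,1]$. Since $f$ is continuously differentiable on $\mathcal{Q}$ (by Assumption~\ref{assum:differ}), the chain rule together with the fundamental theorem of calculus gives
\begin{align*}
f(\p_a) - f(\p_b) = \int_0^1 \tran{\nabla f(\p_b + t(\p_a - \p_b))}(\p_a - \p_b)\, dt.
\end{align*}
Then I would subtract the constant quantity $\tran{\nabla f(\p_b)}(\p_a - \p_b) = \int_0^1 \tran{\nabla f(\p_b)}(\p_a - \p_b)\, dt$ from both sides, obtaining
\begin{align*}
f(\p_a) - f(\p_b) - \tran{\nabla f(\p_b)}(\p_a - \p_b) = \int_0^1 \tran{\bigl[\nabla f(\p_b + t(\p_a - \p_b)) - \nabla f(\p_b)\bigr]}(\p_a - \p_b)\, dt.
\end{align*}

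Next I would take absolute values, move them inside the integral, and apply the Cauchy–Schwarz inequality to the integrand, which yields the pointwise bound
\begin{align*}
\bigl\|\nabla f(\p_b + t(\p_a - \p_b)) - \nabla f(\p_b)\bigr\|\,\|\p_a - \p_b\|.
\end{align*}
By Assumption~\ref{assum:Lipschitz} and the fact that $\p_b + t(\p_a - \p_b) \in \mathcal{Q}$ for all $t \in [0,1]$ (convexity), the gradient difference norm is bounded by $L\,t\,\|\p_a - \p_b\|$. Plugging this in and evaluating $\int_0^1 t\, dt = 1/2$ produces the desired bound $\tfrac{L}{2}\|\p_a - \p_b\|^2$.

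There is no real obstacle here; the argument is textbook. The only subtlety worth flagging explicitly in the write-up is the use of convexity of $\mathcal{Q}$ to justify that Lipschitz continuity of $\nabla f$ applies at every interior point of the segment, not just at the endpoints.
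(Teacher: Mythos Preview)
Your argument is correct and is exactly the standard proof of this descent-lemma inequality. The paper itself does not give a proof but simply cites Lemma~1.2.3 in Nesterov's \emph{Introductory Lectures on Convex Optimization}, whose proof is precisely the integral representation followed by Cauchy--Schwarz and the Lipschitz estimate that you have written out, so your proposal matches the referenced approach.
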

\begin{proof}
	See Lemma~1.2.3 in \citep{Nesterov:2004}.
\end{proof}

\vspace{0.4cm}

Lipschitz continuity of $\nabla f$ implies that $f$ is a continuously differentiable function. If, in addition, the function $f$ is twice continuously differentiable, then a Lipschitz continuous gradient implies bounded eigenvalues on the Hessian, as stated in the following theorem.
\begin{Theorem} \label{th:spectral}
	Let Assumption~\ref{assum:differ} hold. Then, \eqref{eq:lip_Hf} is satisfied with $0 \leq L < \infty$ if and only if  $L$ is an upper bound on the spectral radius of the Hessian $\nabla^2 f(\p)$ for all $\p\in\mathcal{Q}$.
\end{Theorem}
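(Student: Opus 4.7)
The plan is to prove both directions by exploiting three facts: that the Hessian is symmetric (by Assumption~\ref{assum:differ}, via Clairaut/Schwarz), so its spectral radius equals its spectral norm; that $\mathcal{Q}$ is convex, so any segment between points in $\mathcal{Q}$ remains inside; and that $\nabla^2 f$ is continuous on $\mathcal{Q}$.

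For the $(\Leftarrow)$ direction, I would assume that $\|\nabla^2 f(\p)\| \leq L$ on $\mathcal{Q}$ and use the fundamental theorem of calculus along the segment from $\p_b$ to $\p_a$:
\begin{align*}
\nabla f(\p_a) - \nabla f(\p_b) = \int_0^1 \nabla^2 f\bigl(\p_b + t(\p_a - \p_b)\bigr)(\p_a - \p_b)\,dt.
\end{align*}
Convexity of $\mathcal{Q}$ ensures the integrand is well-defined. Taking the Euclidean norm, pulling it inside the integral, and applying the spectral-norm bound yields $\|\nabla f(\p_a) - \nabla f(\p_b)\| \leq L\|\p_a - \p_b\|$, which is \eqref{eq:lip_Hf}.

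For the $(\Rightarrow)$ direction, I would assume \eqref{eq:lip_Hf} and fix an interior point $\p \in \mathcal{Q}$ together with an arbitrary unit vector $\vect{w} \in \mathbb{R}^{n_u}$. For sufficiently small $h > 0$, the point $\p + h\vect{w}$ lies in $\mathcal{Q}$, so
\begin{align*}
\nabla^2 f(\p)\,\vect{w} = \lim_{h \to 0^+} \frac{\nabla f(\p + h\vect{w}) - \nabla f(\p)}{h},
\end{align*}
and the Lipschitz bound gives $\|\nabla^2 f(\p)\vect{w}\| \leq L$. Taking the supremum over unit vectors $\vect{w}$ yields $\|\nabla^2 f(\p)\| \leq L$, and by symmetry of the Hessian this equals the spectral radius. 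For boundary points of $\mathcal{Q}$, the bound extends by continuity of $\nabla^2 f$ (Assumption~\ref{assum:differ}).

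The only step that needs any care is the reverse direction at boundary points, since the difference quotient requires $\p + h\vect{w} \in \mathcal{Q}$ for small $h$, which may fail on the boundary; the continuity argument handles this cleanly. Everything else is a direct consequence of the symmetric-matrix identity $\|\vect{A}\| = \rho(\vect{A})$ combined with convexity of $\mathcal{Q}$.
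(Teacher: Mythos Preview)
Your argument is correct and is the standard route to this classical equivalence. The paper does not actually supply a proof of Theorem~\ref{th:spectral}: it simply cites an external reference, so there is no in-paper argument to compare against. Your write-up is therefore strictly more informative than what appears in the paper.

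One small caveat worth making explicit: the $(\Rightarrow)$ direction, as you have written it, implicitly assumes that $\mathcal{Q}$ has nonempty interior in $\mathbb{R}^{n_u}$, since otherwise there are no interior points from which to launch the difference quotient in an arbitrary direction $\vect{w}$, and the continuity extension to the boundary has nothing to extend from. This is not a flaw in your proof so much as an unstated hypothesis of the theorem itself (if $\mathcal{Q}$ lies in a proper affine subspace, the Lipschitz condition on $\mathcal{Q}$ cannot constrain Hessian eigenvalues in directions transverse to that subspace). In the paper's applications $\mathcal{Q}$ is always a full-dimensional box, so the issue never arises, but you may want to add a single clause noting this assumption.
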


\begin{proof}
	A proof can be found in \citep{Marchetti:2022}.
\end{proof}

\subsection{Linear Interpolation Model}

The linear interpolating polynomial has the form 
\begin{align}
	& m(\p) = c + \tran{\vect{g}}\p. \label{eq:funm}
\end{align}

Using a first-order approximation of $f$ in the neighborhood of $\p_0$, the value of $f$ at the points $\p_j$, $j=1,\dots,n_u$, is given by
\begin{align}
	& f(\p_j) = f(\p_0) + \tran{\nabla f(\p_0)}(\p_j - \p_0) + \mathcal{O}(\|\p_j-\p_0\|^2), \qquad j=1,\dots,n_u. \nonumber
\end{align}

An approximation $\vect{g} = \nabla m$ of the gradient $\nabla f(\p_0)$ satisfies
\begin{align}
	& \tilde{f}(\p_j) = \tilde{f}(\p_0) + \tran{\vect{g}}(\p_j - \p_0), \qquad j=1,\dots,n_u, \label{eq:interp}
\end{align}
which can be written as
\begin{align}
	& \tran{\vect{y}} = \tran{\vect{g}}\vect{U}, \nonumber
\end{align}
where $\vect{U}$ is defined in \eqref{eq:U}, and $\vect{y}$ is
\begin{align}
	& \vect{y} = \tran{\left(\tilde{f}(\p_1) - \tilde{f}(\p_0),\ \tilde{f}(\p_2) - \tilde{f}(\p_0),\ \dots,\ \tilde{f}(\p_{n_u}) - \tilde{f}(\p_0) \right)}.
\end{align}

Assuming that the sample set $\mathcal{U}$ is poised for linear interpolation, the gradient approximation $\vect{g}$ can then be computed as
\begin{align}
	& \vect{g} = \tran{[\vect{U}^{-1}]}\vect{y}, \label{simplexgrad}
\end{align}
which is known as the {\it simplex gradient}.

The linear interpolating polynomial is then given by
\begin{align}
	m(\p) = \tilde{f}(\p_0) + \tran{\vect{g}}(\p - \p_0). \label{eq:m_u0}
\end{align}

From \eqref{eq:funm}, \eqref{eq:interp}, and \eqref{eq:m_u0}, we have
\begin{align}
	& c = \tilde{f}(\p_0) - \tran{\vect{g}}\p_0 = \tilde{f}(\p_j) - \tran{\vect{g}}\p_j, \quad j=1,\dots,n_u. \nonumber
\end{align}

Hence, more generally, the linear interpolating polynomial can be written as
\begin{align}
	m(\p) = \tilde{f}(\p_j) + \tran{\vect{g}}(\p - \p_j), \qquad \text{for any}\ j\in\{0,1,\dots,n_u\}, \label{eq:m_general}
\end{align}
which shows that $m(\p)$ can be written using as a reference any of the points in the sample set $\mathcal{U}$.

\section{Gradient-Error Bounds for Linear Interpolation}
\label{sec:errorbounds}

We turn our attention to the quality of the simplex gradient $\vect{g}$ as an approximation to $\nabla f(\p)$, for which we define the gradient error
\begin{align}
	& \err(\p) := \vect{g} - \nabla f(\p),
\end{align}
which can be partitioned as
\begin{align}
	& \err(\p) = \err_t(\p) + \err_n,
\end{align}
with
\begin{align}
	& \err_t(\p) = \tran{[\vect{U}^{-1}]}\tran{[\ f(\p_1)-f(\p_0), \ \dots\ , \ f(\p_{n_u})-f(\p_0)\ ]} - \nabla f(\p) \label{equ:sop_d} \\
	& \err_n = \tran{[\vect{U}^{-1}]}\tran{[\ \ru_1-\ru_0, \quad \dots\ , \quad \ru_{n_u}-\ru_0\mbox{ }]}, \label{equ:sop_s} 
\end{align}
where $\err_t$ and $\err_n$ represent errors due to truncation and measurement noise, respectively \cite{Brekelmans:05}. The truncation error is due to the approximation incurred by using finite differences and is related to the curvature of $f(\p)$. The measurement noise error is due to the presence of measurement noise in $\tilde{f}(\p)$. 
The truncation gradient error can be written as
\begin{align}
	& \err_t(\p) = \vect{g}_t - \nabla f(\p) = \tran{[\vect{U}^{-1}]}\vect{y}_t - \nabla f(\p), \nonumber\\
	& \vect{y}_t = \tran{\left[f(\p_1) - f(\p_0),\ f(\p_2) - f(\p_0),\ \dots,\ f(\p_{n_u}) - f(\p_0) \right]}, \nonumber
\end{align}
where $\vect{g}_t$ is the simplex gradient obtained in the abscence of measurement noise.\\

The gradient-error norm satisfies the inequality
\begin{align}
	& \| \err(\p) \|  \leq \| \err_t(\p)\| + \| \err_n \|. \label{eq:normineq}
\end{align}

In Subsections \ref{ssec:truncation} and \ref{ssec:noise}, we shall analyze different upper bounds for the gradient-error norms that are due to truncation and measurement noise, respectively.

\subsection{Gradient Error due to Truncation}
\label{ssec:truncation}

In this subsection, we present various upper bounds on $\|\err_t\|$. All the bounds rely on the assumption of Lipschitz continuity of the gradient $\nabla f(\p)$.

\subsubsection{The Delta Bound}
\label{ssec:origin}

We start by considering the bound on $\|\err_t(\p_0)\|$ that is presented in \cite{Conn:2009}. This gradient-error bound has been extensively used in the literature of derivative-free optimization.

\begin{Theorem} \label{th:delta}
	Let Assumptions~\ref{assum:differ}, \ref{assum:Lipschitz}, and \ref{assum:Poisedness} hold. Then
		\begin{align}
			& \|\vect{g}_t - \nabla f(\p_0)\| \leq T_d := \|\hat{\vect{U}}^{-1}\|\sqrt{n_u}\frac{L}{2}\Delta, \label{eq:deltabound}
		\end{align}
		\begin{align}
			& \text{with}\quad \Delta = \max_{\p_j\in\ \mathcal{U}} \|\p_j - \p_0\|,\qquad\quad \text{and}\quad \hat{\vect{U}} = \frac{1}{\Delta}\vect{U}. \label{eq:deltadef} 
	\end{align} 
\end{Theorem}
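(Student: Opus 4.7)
The plan is to start from the interpolation identity $\tran{\vect{U}}\vect{g}_t = \vect{y}_t$, which follows directly from \eqref{eq:interp} in the noise-free case, and rewrite the gradient error as
\begin{align*}
\vect{g}_t - \nabla f(\p_0) = \tran{[\vect{U}^{-1}]}\,\vect{y}_t - \nabla f(\p_0) = \tran{[\vect{U}^{-1}]}\bigl(\vect{y}_t - \tran{\vect{U}}\nabla f(\p_0)\bigr),
\end{align*}
where the last equality uses that $\tran{[\vect{U}^{-1}]}\tran{\vect{U}} = \vect{I}_{n_u}$. The key observation is that the vector $\vect{r} := \vect{y}_t - \tran{\vect{U}}\nabla f(\p_0)$ is precisely the stack of the $n_u$ interpolation residuals, whose $j$-th component is
\begin{align*}
r_j = f(\p_j) - f(\p_0) - \tran{\nabla f(\p_0)}(\p_j - \p_0).
\end{align*}

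Next, I would invoke Assumption~\ref{assum:Lipschitz} together with Lemma~\ref{lemma:1}, applied with $\p_a = \p_j$ and $\p_b = \p_0$, to obtain the pointwise bound $|r_j| \leq \tfrac{L}{2}\|\p_j - \p_0\|^2$. Since the convex-hull assumption guarantees every sample point lies in $\mathcal{Q}$, Lemma~\ref{lemma:1} applies to each $j \in \{1,\dots,n_u\}$. Using the definition $\Delta = \max_j \|\p_j - \p_0\|$, each entry satisfies $|r_j| \leq \tfrac{L}{2}\Delta^2$, and summing the squares gives $\|\vect{r}\| \leq \sqrt{n_u}\,\tfrac{L}{2}\,\Delta^2$.

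I would then combine these ingredients via the submultiplicativity of the spectral norm:
\begin{align*}
\|\vect{g}_t - \nabla f(\p_0)\| \leq \bigl\|\tran{[\vect{U}^{-1}]}\bigr\|\cdot \|\vect{r}\| \leq \|\vect{U}^{-1}\|\,\sqrt{n_u}\,\frac{L}{2}\,\Delta^2,
\end{align*}
invoking that the spectral norm is invariant under transposition. Finally, substituting the scaled matrix $\vect{U} = \Delta\,\hat{\vect{U}}$ gives $\|\vect{U}^{-1}\| = \|\hat{\vect{U}}^{-1}\|/\Delta$, which absorbs one factor of $\Delta$ and yields the stated bound $T_d = \|\hat{\vect{U}}^{-1}\|\sqrt{n_u}\,\tfrac{L}{2}\,\Delta$.

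There is no real obstacle here: the argument is a textbook combination of the descent lemma with the operator-norm inequality applied to the inverted interpolation matrix. The only subtlety worth flagging in the write-up is the normalization step, namely that scaling $\vect{U}$ by $\Delta$ produces a dimensionally consistent bound that depends on the geometry of the sample set only through the well-poisedness constant $\|\hat{\vect{U}}^{-1}\|$, and separates this geometric factor from the radius $\Delta$. Poisedness (Assumption~\ref{assum:Poisedness}) is used only to ensure that $\vect{U}^{-1}$ exists and therefore $\|\hat{\vect{U}}^{-1}\|$ is finite.
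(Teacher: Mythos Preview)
Your proposal is correct and follows essentially the same approach as the paper's proof: both identify the residual vector with components $r_j = f(\p_j) - f(\p_0) - \tran{\nabla f(\p_0)}(\p_j - \p_0)$, bound each component via Lemma~\ref{lemma:1} by $\tfrac{L}{2}\Delta^2$, take the Euclidean norm to pick up the $\sqrt{n_u}$ factor, and finish with the submultiplicativity of $\|\vect{U}^{-1}\|$ followed by the rescaling $\vect{U} = \Delta\hat{\vect{U}}$. The only cosmetic difference is that the paper writes the residual as $\tran{(\vect{g}_t - \nabla f(\p_0))}\vect{U}$ and then right-multiplies by $\vect{U}^{-1}$, whereas you write it as $\tran{[\vect{U}^{-1}]}\vect{r}$ directly; these are the same object.
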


\begin{proof}
	The proof given in \cite{Conn:2009} uses the integral form of the mean-value theorem. For completeness, we present the proof using Lemma~\ref{lemma:1}.
	\begin{align}
		& |f(\p_j) - f(\p_0) - \tran{\nabla f(\p_0)}(\p_j-\p_0)| \leq \frac{L}{2}\|\p_j-\p_0\|^2, \quad j=1,\dots,n_u. \nonumber
	\end{align}
	
	Noting that $f(\p_j) = f(\p_0) + \tran{\vect{g}_t}(\p_j - \p_0)$, we have
	\begin{align}
		& \left|\tran{(\vect{g}_t - \nabla f(\p_0))}(\p_j-\p_0)\right| \leq \frac{L}{2}\|\p_j-\p_0\|^2 \leq \frac{L}{2}\Delta^2, \quad j=1,\dots,n_u, \nonumber
	\end{align}
	which can be written as the element-wise vector inequality
	\begin{align}
		& \left|\tran{(\vect{g}_t - \nabla f(\p_0))}\vect{U}\right| \leq \frac{L}{2}\Delta^2[1,\ 1,\ \dots\ ,\ 1]. \nonumber
	\end{align}
	
	Taking the norm of the inequality gives
	\begin{align}
		& \bigl\|\tran{(\vect{g}_t - \nabla f(\p_0))}\vect{U}\bigr\| \leq \sqrt{n_u}\frac{L}{2}\Delta^2. \label{eq:res1}
	\end{align}
	
	On the other hand, we have
	\begin{align}
		& \|\vect{g}_t - \nabla f(\p_0)\| =  \bigl\|\tran{(\vect{g}_t - \nabla f(\p_0))}\vect{U}\vect{U}^{-1}\bigr\| \leq \|\vect{U}^{-1}\|\bigl\|\tran{(\vect{g}_t - \nabla f(\p_0))}\vect{U}\bigr\|. \label{eq:res2}
	\end{align}
	
	Combining \eqref{eq:res1} and \eqref{eq:res2} gives
		\begin{align}
			& \|\vect{g}_t - \nabla f(\p_0)\| \leq  \|\vect{U}^{-1}\|\sqrt{n_u}\frac{L}{2}\Delta^2, \label{eq:deltabound2}
		\end{align}
		and the result in \eqref{eq:deltabound} follow from noting that $\hat{\vect{U}}^{-1} = \Delta\vect{U}^{-1}$. 
\end{proof}

\vspace{0.4cm}

We will refer to the truncation gradient-error bound \eqref{eq:deltabound} as the {\it delta bound}, and we will denote it by $T_d$.
The following remark is in order:
	\begin{Remark}
		If all the points in the sample set $\mathcal{U}$ are multiplied by a scaling factor $h>0$, that is, $\bar\p_j = h\p_j$, for $j=0,1,\dots,n_u$, then the scaled matrix $\vect{U}$ is $\bar{\vect{U}} = h\vect{U}$, and the scaled value of $\Delta$ is $\bar{\Delta} = h\Delta$. Hence, $\frac{\bar{\vect{U}}}{\bar{\Delta}} = \frac{\vect{U}}{\Delta} = \hat{\vect{U}}$ for all $h>0$, and the matrix $\hat{\vect{U}}$ does not depend on the scaling factor $h$.
\end{Remark} 

Note that all the sample points are contained in the ball $B(\p_0,\Delta)$ of radius $\Delta$. A uniform bound that is valid for any point $\p$ in the ball $B(\p_0,\Delta)$ is provided in \cite{Conn:2009}, as given in the following corollary. 
\begin{Corollary} \label{cor:delta}
	Let Assumptions~\ref{assum:differ}, \ref{assum:Lipschitz}, and \ref{assum:Poisedness} hold. Then, for all $\p \in B(\p_0,\Delta)$, we have
	\begin{align}
		& \|\vect{g}_t - \nabla f(\p)\| \leq \|\hat{\vect{U}}^{-1}\|\sqrt{n_u}\frac{L}{2}\Delta + L\Delta. \label{eq:deltabound_u}
	\end{align}
\end{Corollary}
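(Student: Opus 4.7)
The plan is to split the error $\vect{g}_t - \nabla f(\p)$ into the part already controlled at the reference vertex $\p_0$ by Theorem~\ref{th:delta}, plus a piece measuring how far $\nabla f$ can drift from $\nabla f(\p_0)$ as $\p$ ranges over $B(\p_0,\Delta)$. Concretely, I would insert $\nabla f(\p_0)$ and apply the triangle inequality:
\begin{align*}
	\|\vect{g}_t - \nabla f(\p)\| \leq \|\vect{g}_t - \nabla f(\p_0)\| + \|\nabla f(\p_0) - \nabla f(\p)\|.
\end{align*}

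The first term on the right is exactly the quantity bounded in Theorem~\ref{th:delta}, so it is at most $\|\hat{\vect{U}}^{-1}\|\sqrt{n_u}\frac{L}{2}\Delta$ under Assumptions~\ref{assum:differ}, \ref{assum:Lipschitz}, and \ref{assum:Poisedness}. The second term is handled directly by the Lipschitz condition \eqref{eq:lip_Hf}: since $\p\in B(\p_0,\Delta)$ (and invoking the implicit assumption that this ball lies in the convex set $\mathcal{Q}$ on which $\nabla f$ is Lipschitz), we get $\|\nabla f(\p_0)-\nabla f(\p)\|\leq L\|\p-\p_0\|\leq L\Delta$. Adding the two bounds yields \eqref{eq:deltabound_u}.

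There is essentially no technical obstacle here; the argument is a one-line triangle inequality combined with the previous theorem and the defining Lipschitz property. The only subtlety worth flagging is the domain issue: Theorem~\ref{th:delta} applies on $\mathcal{Q}$, which by Assumption~\ref{assum:Poisedness} contains $\mathrm{conv}\,\mathcal{U}$, but the corollary asks for a uniform bound over the larger set $B(\p_0,\Delta)$. For the Lipschitz step to be legitimate one tacitly needs $B(\p_0,\Delta)\subset\mathcal{Q}$, which I would either state as an additional hypothesis when invoking the result or absorb into the standing assumption on $\mathcal{Q}$. Once that is in place, no further computation is required.
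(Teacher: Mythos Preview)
Your proposal is correct and follows exactly the same argument as the paper: insert $\nabla f(\p_0)$, apply the triangle inequality, bound the first term by Theorem~\ref{th:delta} and the second by the Lipschitz property together with $\|\p-\p_0\|\leq\Delta$. The domain subtlety you flag about $B(\p_0,\Delta)\subset\mathcal{Q}$ is a fair observation that the paper leaves implicit.
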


\begin{proof}
	From the Lipschitz continuity of the gradient of $f$, we have\\
	
	$\|\vect{g}_t - \nabla f(\p)\| \leq \|\vect{g}_t - \nabla f(\p_0)\| + \|\nabla f(\p_0) - \nabla f(\p)\| \leq \|\hat{\vect{U}}^{-1}\|\sqrt{n_u}\frac{L}{2}\Delta + L\Delta$.
\end{proof}

\vspace{0.4cm}

The bound given in Corollary \ref{cor:delta} is valid for all $\p\in B(\p_0,\Delta)$, but it is clearly conservative. Indeed, when evaluated at $\p_0$, the bound is greater than that given by Theorem~\ref{th:delta}. Note that, if uniformity is not required, the following pointwise bound may be considered: 
\begin{align}
	& \|\vect{g}_t - \nabla f(\p)\| \leq \|\hat{\vect{U}}^{-1}\|\sqrt{n_u}\frac{L}{2}\Delta + L\|\p-\p_0\|. \label{eq:deltabound_u2}
\end{align}

\begin{Remark}[Delta bound for FFD] \label{rmk:FFDdelta}
	Let the sample points in $\mathcal{U}$ have the arrangement used in forward-finite differences (FFD), that is, $\p_j = \p_0 + h\vect{e}_j$, for $j=1,\dots,n_u$, where $h>0$ is the step size, and $\vect{e}_j$ is the $j^{th}$ unit vector. Then, the gradient-error bound in Theorem~\ref{th:delta} reduces to
	\begin{align}
		& \|\vect{g}_t - \nabla f(\p_0)\| \leq \frac{L \sqrt{n_u}\ h}{2}. \label{eq:deltaFFD}
	\end{align}
	Indeed, in the FFD arrangement, we have $\|\vect{U}^{-1}\| = 1/h$, and $\Delta = h$.
\end{Remark}

\subsubsection{The Radial Bound}
\label{ssec:radial}

Building on the preliminary work in \cite{Marchetti:2010}, we derive an upper bound on the norm of the gradient error due to truncation, applicable when the columns of the matrix $\vect{U}$ are orthogonal but not necessarily normalized. 
We refer to this bound as the {\it radial bound}, denoted by $T_r$. 

\begin{Theorem} \label{th:radial}
	Let Assumptions~\ref{assum:differ} and \ref{assum:Lipschitz} hold. If the columns in matrix $\vect{U}$ are orthogonal, then,
	\begin{align}
		& \|\vect{g}_t - \nabla f(\p_0)\| \leq T_r := L\ r(\mathcal{U}), \label{eq:radialbound}
	\end{align}
	where
	\begin{align}
		& r(\mathcal{U}) = \frac{1}{2} \Bigl\lVert  \bigl[ \ \tran{(\p_1-\p_0)}(\p_1-\p_0), \ \dots\ , \ \tran{(\p_{n_u}-\p_0)}(\p_{n_u}-\p_0) \ \bigr] \ \vect{U}^{-1} \Bigr\rVert \label{eq:radius}
	\end{align}
	is the radius of the unique sphere that contains all the sample points on its surface.
\end{Theorem}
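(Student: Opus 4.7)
The plan is to start exactly as in the proof of the delta bound (Theorem~\ref{th:delta}), applying Lemma~\ref{lemma:1} separately at each sample point $\p_j$, $j=1,\dots,n_u$. Setting $\vect{d}_j := \p_j - \p_0$ and using the interpolation identity $f(\p_j) = f(\p_0) + \tran{\vect{g}_t}\vect{d}_j$, this yields the componentwise vector inequality
\begin{align}
    \left|\tran{(\vect{g}_t - \nabla f(\p_0))}\,\vect{U}\right|
    \leq \frac{L}{2}\bigl[\,\|\vect{d}_1\|^2,\ \dots,\ \|\vect{d}_{n_u}\|^2\,\bigr], \nonumber
\end{align}
where the absolute value is taken element-wise. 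Unlike the delta bound, which immediately bounds by the submultiplicative inequality $\|\cdot\,\vect{U}^{-1}\|\leq\|\cdot\|\,\|\vect{U}^{-1}\|$, the idea here is to keep the right-hand side as a data-dependent row vector and post-multiply by $\vect{U}^{-1}$ directly. That is, from $\tran{(\vect{g}_t - \nabla f(\p_0))} = \tran{(\vect{g}_t - \nabla f(\p_0))}\vect{U}\,\vect{U}^{-1}$, one obtains
\begin{align}
    \|\vect{g}_t - \nabla f(\p_0)\| \;\leq\; \tfrac{L}{2}\Bigl\|\bigl[\,\|\vect{d}_1\|^2,\ \dots,\ \|\vect{d}_{n_u}\|^2\,\bigr]\,\vect{U}^{-1}\Bigr\| \;=\; L\,r(\mathcal{U}). \nonumber
\end{align}
The subtle point requiring justification is that this last step is valid element-wise: because the $j$th row of $\vect{U}^{-1}$ ``selects'' the coefficient of $\vect{d}_j$, signs could in principle inflate the bound. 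Here orthogonality of the columns of $\vect{U}$ comes in essentially: when the $\vect{d}_j$ are orthogonal, $\vect{U}^{-1} = \opdiag\{1/\|\vect{d}_j\|^2\}\,\tran{\vect{U}}$ (since $\tran{\vect{U}}\vect{U}$ is diagonal), so each column of $\vect{U}^{-1}$ is a scalar multiple of a single $\vect{d}_j$. I would therefore decompose $\vect{g}_t - \nabla f(\p_0)$ in the orthogonal basis $\{\vect{d}_1,\dots,\vect{d}_{n_u}\}$ and bound each coordinate using the componentwise inequality above, which gives
\begin{align}
    \|\vect{g}_t - \nabla f(\p_0)\|^2 \;=\; \sum_{j=1}^{n_u}\frac{\bigl(\tran{(\vect{g}_t - \nabla f(\p_0))}\vect{d}_j\bigr)^2}{\|\vect{d}_j\|^2} \;\leq\; \frac{L^2}{4}\sum_{j=1}^{n_u}\|\vect{d}_j\|^2. \nonumber
\end{align}

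The remaining task is algebraic: I would verify that the right-hand side is precisely $L^2 r(\mathcal{U})^2$ by substituting the orthogonal-column expression for $\vect{U}^{-1}$ into the definition of $r(\mathcal{U})$, which collapses the row vector $\bigl[\|\vect{d}_1\|^2,\dots,\|\vect{d}_{n_u}\|^2\bigr]\vect{U}^{-1}$ to $\sum_j \tran{\vect{d}_j}$, whose squared norm equals $\sum_j\|\vect{d}_j\|^2$ by orthogonality. Finally, for the geometric interpretation, I would directly solve for the circumscribing sphere: with center $\vect{c} = \p_0 + \vect{\alpha}$, the equidistance conditions $\|\vect{\alpha}\|^2 = \|\vect{\alpha} - \vect{d}_j\|^2$ reduce, via orthogonality, to $\vect{\alpha} = \tfrac{1}{2}\sum_{j}\vect{d}_j$, so the radius is $\|\vect{\alpha}\| = \tfrac{1}{2}\sqrt{\sum_j\|\vect{d}_j\|^2} = r(\mathcal{U})$, and uniqueness follows because $\{\vect{d}_j\}$ spans $\mathbb{R}^{n_u}$.

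The main obstacle is the careful elementwise use of the vector inequality: in general one cannot take norms of both sides of $|\vect{a}| \leq \vect{b}$ after right-multiplying by a matrix, so the argument crucially relies on the orthogonality hypothesis, which ensures that the coordinate-wise decomposition aligns with the bounds available from Lemma~\ref{lemma:1}. Once this is established, both the bound and its identification with the circumradius follow by short calculations.
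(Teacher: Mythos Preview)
Your proposal is correct and follows essentially the same approach as the paper: both arguments obtain the componentwise bound $\bigl|\tran{(\vect{g}_t-\nabla f(\p_0))}\vect{d}_j\bigr|\le\tfrac{L}{2}\|\vect{d}_j\|^2$ and then exploit orthogonality of the $\vect{d}_j$ to turn this into a bound on the full norm via $\|\err_t\|^2=\sum_j(\tran{\err_t}\vect{d}_j)^2/\|\vect{d}_j\|^2$, which the paper phrases equivalently as $\tran{\vect{p}_1}\vect{R}\,\vect{p}_1\le\tran{\vect{p}_2}\vect{R}\,\vect{p}_2$ with $\vect{R}=\vect{U}^{-1}\tran{[\vect{U}^{-1}]}=\vect{D}^{-1}$ diagonal. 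The only minor differences are that you invoke Lemma~\ref{lemma:1} directly where the paper goes through Taylor's theorem and the spectral bound on the Hessian (yielding the same inequality), and that the paper establishes the circumsphere identity \eqref{eq:radius} for an arbitrary poised set (not just the orthogonal case), which it later reuses outside this theorem.
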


\begin{proof}
	Applying Taylor's theorem in the neighborhood of $\p_0$ gives
	\begin{align}
		f(\p_j)-f(\p_0) =&\ \tran{\nabla f(\p_0)}(\p_j-\p_0) + \frac{1}{2}\tran{(\p_j-\p_0)}\nabla^2f(\bar{\p}_j)(\p_j-\p_0),  \label{equ:sop_d_a}
	\end{align}
	with $\bar{\p}_j = \p_0+\vartheta_j(\p_j-\p_0)$, for some $\vartheta_j \in [0,\ 1]$, for $j = 1,\dots,\np$. Substituting \eqref{equ:sop_d_a} into \eqref{equ:sop_d}, and upon evaluating at $\p = \p_0$, we obtain
	\begin{align}
		\tran{\err_t(\p_0)} &= \left( \tran{\nabla f(\p_0)}\ \vect{U} + \frac{1}{2} \left[ \tran{(\p_1-\p_0)}\nabla^2f(\bar{\p}_1)(\p_1-\p_0), \ \cdots \right. \right. \label{equ:sop_d_c} \\
		& \left. \left. \qquad \cdots \ \tran{(\p_{n_u}-\p_0)}\nabla^2f(\bar{\p}_{\np})(\p_{n_u}-\p_0) \right] \right) \ \vect{U}^{-1} - \tran{\nabla f(\p_0)} \nonumber \\
		& = \frac{1}{2} \left[ \tran{(\p_1-\p_0)}\nabla^2f(\bar{\p}_1)(\p_1-\p_0), \ \cdots\ ,\  \tran{(\p_{n_u}-\p_0)}\nabla^2f(\bar{\p}_{\np})(\p_{n_u}-\p_0) \right] \ \vect{U}^{-1}. \nonumber
	\end{align}
	Since the Hessian matrix $\nabla^2f(\bar{\p}_j)$ is symmetric, there exists an orthogonal matrix $\vect{P}_j$ such that $\tran{\vect{P}_j}\nabla^2f(\bar{\p}_j)\vect{P}_j=\vect{D}_j=\text{diag}(\lambda_{1,j},\dots,\lambda_{\np,j})$, where $\lambda_{i,j}\in\sigma\bigl(\nabla^2f(\bar{\p}_j)\bigr)$, and $\sigma(.)$ refers to the spectrum of a matrix. Letting $\vect{w}_j=\tran{\vect{P}_j}(\p_j-\p_0)$, for $j=1,\dots,n_u$, gives
	\begin{align} 
		& \bigl\lvert \tran{(\p_j-\p_0)}\nabla^2f(\bar{\p}_j)(\p_j-\p_0) \bigr\rvert = \bigl\lvert  \tran{\vect{w}_j}\tran{\vect{P}_j}\nabla^2f(\bar{\p}_j)\vect{P}_j\vect{w}_j \bigr\rvert = \bigl\lvert \tran{\vect{w}_j}\vect{D}_j\vect{w}_j \bigr\rvert \nonumber\\
		& = \Bigl\lvert \sum_{i=1}^{\np} \lambda_{i,j}w_{i,j}^2 \Bigr\rvert \leq L \sum_{i=1}^{\np}w_{i,j}^2  = L \tran{\vect{w}_j}\vect{w}_j = L \tran{(\p_j-\p_0)}(\p_j-\p_0), \quad j  = 1,\dots,\np, \label{eq:spect1} 
	\end{align}
	where the inequality follows from Theorem~\ref{th:spectral}.
	Let us introduce the matrix $\vect{R}:=\vect{U}^{-1}\tran{[\vect{U}^{-1}]}$ and the vectors
	\begin{align} 
		& \tran{\vect{p}_1} :=  \left[ \tran{\vect{w}_1}\vect{D}_1\vect{w}_1 \ \dots \ \tran{\vect{w}_{\np}}\vect{D}_{\np}\vect{w}_{\np} \right], \qquad\quad \tran{\vect{p}_2} :=  \left[ L\tran{\vect{w}_1}\vect{w}_1 \ \dots \ L\tran{\vect{w}_{\np}}\vect{w}_{\np} \right]. \nonumber 
	\end{align}
	
	Next, we show that $\vect{R}$ is a diagonal matrix with positive elements. 
	Given that $\vect{U}$ is a square matrix with orthogonal but not necessarily normalized columns, the columns of $\vect{U}$ satisfy
	\begin{align} 
		& \tran{\vect{U}}\vect{U} = \vect{D} = \text{diag}\{\|\p_1-\p_0\|^2,\|\p_2-\p_0\|^2,\dots,\|\p_{n_u}-\p_0\|^2\}. \nonumber
	\end{align}
	
	Since $\vect{U}$ has independent columns, Assumption~\ref{assum:Poisedness} is satisfied, and $\vect{U}$ and $\vect{D}$ are invertible. Noting that $\vect{D}^{-1}\tran{\vect{U}}\vect{U} = \vect{I}$, it follows that $\vect{U}^{-1}=\vect{D}^{-1}\tran{\vect{U}}$, and $\vect{R}=\vect{U}^{-1}\tran{[\vect{U}^{-1}]} = \vect{D}^{-1}$ is a diagonal matrix, whose elements are the reciprocals of the squared norms of the columns of $\vect{U}$.
	
	From \eqref{eq:spect1}, the absolute value of each element of $\vect{p}_1$ is lower than or equal to the corresponding element of $\vect{p}_2$, and since $\vect{R}$ is a diagonal matrix with positive elements, it follows that
	\begin{align}
		& \tran{\vect{p}_1}\vect{R}\ \vect{p}_1 \leq \tran{\vect{p}_2}\vect{R}\ \vect{p}_2. \label{eq:inequality}
	\end{align}
	
	Noting that
	\begin{align}
		& \tran{\vect{p}_1}\vect{R}\ \vect{p}_1 = 4 \|\err_t(\p_0)\|^2, \nonumber
	\end{align}
	and
	\begin{align}
		& \tran{\vect{p}_2}\vect{R}\ \vect{p}_2 = L^2 \bigl\lVert  \bigl[ \ \tran{(\p_1-\p_0)}(\p_1-\p_0), \ \dots\ , \ \tran{(\p_{n_u}-\p_0)}(\p_{n_u}-\p_0) \ \bigr] \ \vect{U}^{-1} \bigr\rVert^2, \nonumber
	\end{align}
	the inequality \eqref{eq:inequality} reduces to the gradient bound in \eqref{eq:radialbound}. It remains to be shown that $r(\mathcal{U})$ is the radius of the sphere that contains all the sample points.\\
	
	Let $\p_c$ be the center of a sphere of radius $r = r(\mathcal{U})$. We have
	\begin{align}
		& r^2 = \tran{(\p_0 - \p_c)}(\p_0 - \p_c)  \label{equ:sph_1}  \\
		& r^2 = \tran{(\p_j - \p_c)}(\p_j - \p_c), \qquad j = 1,\dots,\np.   \label{equ:sph_2} 
	\end{align}
	
	After expanding the right-hand sides of equations \eqref{equ:sph_1} and \eqref{equ:sph_2}, substracting \eqref{equ:sph_1} from  \eqref{equ:sph_2} and rearranging, we obtain
	\begin{align}
		& \tran{\p_j}\p_j - \tran{\p_0}\p_0 = 2\ \tran{\p_c}(\p_j - \p_0), \qquad j = 1,\dots,\np.  \label{equ:sph_3} 
	\end{align}

	Equations \eqref{equ:sph_3} can be written in matrix form as follows
	\begin{align}
		& \bigl[\tran{\p_1}\p_1 - \tran{\p_0}\p_0, \ \dots\ , \ \tran{\p_{n_u}}\p_{n_u} - \tran{\p_0}\p_0 \bigr] = 2 \tran{\p_c} \vect{U}. \label{equ:sph_4} 
	\end{align}
	
	From \eqref{equ:sph_4} and the  expansion
	\begin{align}
		\tran{(\p_j - \p_0)}(\p_j - \p_0) & =  \tran{\p_j}\p_j - 2\tran{\p_0}\p_j + \tran{\p_0}\p_0 \nonumber\\
		& = \tran{\p_j}\p_j - \tran{\p_0}\p_0 - 2\tran{\p_0}(\p_j-\p_0), \quad j = 1,\dots,\np,  \nonumber
	\end{align}
	we can write
	\begin{align}
		& \bigl[ \tran{(\p_1-\p_0)}(\p_1-\p_0), \ \dots\ , \ \tran{(\p_{n_u}-\p_0)}(\p_{n_u}-\p_0) \bigr] \vect{U}^{-1} = \nonumber \\
		& = \bigl[\tran{\p_1}\p_1 - \tran{\p_0}\p_0, \ \dots\ , \ \tran{\p_{n_u}}\p_{n_u} - \tran{\p_0}\p_0 \bigr]\vect{U}^{-1} - 2 \tran{\p_0} \bigl[ \p_1 - \p_0,\ \dots\ ,\ \p_{n_u}-\p_0 \bigr]\vect{U}^{-1} \nonumber\\
		& =  2(\tran{\p_c} - \tran{\p_0}) \label{equ:sph_5}
	\end{align}
	
	Hence,
	\begin{align}
		& \frac{1}{2} \Bigl\lVert  \bigl[ \ \tran{(\p_1-\p_0)}(\p_1-\p_0), \ \dots \ ,\ \tran{(\p_{n_u}-\p_0)}(\p_{n_u}-\p_0) \ \bigr] \vect{U}^{-1} \Bigr\rVert =  \bigl\lVert \p_c - \p_0 \bigr\rVert = r, \label{equ:sop_c}
	\end{align}
	which completes the proof.
\end{proof}

\begin{Remark}[Radial bound for FFD] \label{rmk:FFDradial}
	If, in Theorem~\ref{th:radial}, the points have the FFD arrangement $\p_j = \p_0 + h\vect{e}_j$, for $j = 1,\dots,n_u$, with the step size $h>0$, then we have $\tran{(\p_j-\p_0)}(\p_j-\p_0) = h^2$, $j=1,\dots,n_u$, and $\vect{U} = \text{\rm diag}(h,h,\dots,h)$. Hence,
	\begin{align} 
		&  r(\mathcal{U}) =  \frac{1}{2} \Bigl\lVert  \bigl[ \ h^2 \ h^2 \ \dots \ h^2 \ \bigr] \text{\rm diag}\Bigl(\frac{1}{h}, \frac{1}{h},\dots,\frac{1}{h}\Bigr) \Bigr\rVert = \frac{1}{2}\sqrt{\np}\ h, \nonumber
	\end{align}
	and the radial bound reduces to
	\begin{align}
		& \|\tran{\vect{g}_t} - \nabla f(\p_0)\| \leq \frac{L \sqrt{n_u}\ h}{2}. \nonumber
	\end{align}
\end{Remark}

Remarks \ref{rmk:FFDdelta} an \ref{rmk:FFDradial} show that the delta bound and the radial bound are identical in the case of the FFD arrangement.

The radial bound does not in general bound the gradient error norm if the columns of matrix $\vect{U}$ are not orthogonal. However, it does bound the projection of the gradient error in all the directions $\p_j-\p_0$ given by the columns of $\vect{U}$, as stated in the following corollary.

\begin{Corollary} \label{cor:radial}
	Let Assumptions~\ref{assum:differ}, \ref{assum:Lipschitz}, and \ref{assum:Poisedness} hold. Then, the gradient error projections, in the directions given by the columns of $\vect{U}$, are bounded by the radial bound, that is: 
	\begin{align}
		\Bigl|\tran{\err_t(\p_0)}\frac{\p_j-\p_0}{\|\p_j-\p_0\|} \Bigr| \leq T_r, \qquad j=1,\dots,n_u. \label{eq:corol_radial}
	\end{align}
\end{Corollary}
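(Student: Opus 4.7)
The plan is to reuse equation (\ref{equ:sop_d_c}) from the proof of Theorem~\ref{th:radial}, but to exploit it without requiring orthogonality of the columns of $\vect{U}$. Observe that (\ref{equ:sop_d_c}) was obtained purely from the Taylor expansion of $f$ around $\p_0$ and from the existence of $\vect{U}^{-1}$; it uses only Assumptions~\ref{assum:differ}, \ref{assum:Lipschitz} and \ref{assum:Poisedness}. Right-multiplying that identity by $\vect{U}$ extracts a clean per-direction formula: for each $j = 1, \dots, n_u$,
\begin{align*}
\tran{\err_t(\p_0)}(\p_j - \p_0) \;=\; \tfrac{1}{2}\,\tran{(\p_j-\p_0)}\,\nabla^2 f(\bar{\p}_j)\,(\p_j-\p_0).
\end{align*}

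Next, I would apply Theorem~\ref{th:spectral} to bound the right-hand side. Following the same diagonalization step used in (\ref{eq:spect1})—which again does not rely on orthogonality of $\vect{U}$—one obtains $|\tran{(\p_j-\p_0)}\nabla^2 f(\bar{\p}_j)(\p_j-\p_0)| \leq L\,\|\p_j-\p_0\|^2$. Combining with the identity above and dividing by $\|\p_j-\p_0\|$ yields the directional bound
\begin{align*}
\Bigl|\tran{\err_t(\p_0)}\tfrac{\p_j-\p_0}{\|\p_j-\p_0\|}\Bigr| \;\leq\; \tfrac{L}{2}\,\|\p_j-\p_0\|.
\end{align*}

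The final step is to convert this into a bound involving $r(\mathcal{U})$. Here I would invoke the geometric interpretation of $r(\mathcal{U})$ that was established in equations (\ref{equ:sph_1})–(\ref{equ:sop_c}) of Theorem~\ref{th:radial}'s proof: that argument requires only poisedness of the sample set and identifies $r(\mathcal{U})$ as the circumradius of the simplex, i.e.\ the radius of the unique sphere through $\p_0, \p_1, \dots, \p_{n_u}$. Since both $\p_0$ and $\p_j$ lie on that sphere, their distance is at most the diameter, so $\|\p_j - \p_0\| \leq 2\,r(\mathcal{U})$. Substituting gives $\tfrac{L}{2}\|\p_j-\p_0\| \leq L\,r(\mathcal{U}) = T_r$, which is exactly (\ref{eq:corol_radial}).

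The main obstacle is conceptual rather than computational: one has to notice that neither the key identity (\ref{equ:sop_d_c}) nor the geometric identification of $r(\mathcal{U})$ as a circumradius actually uses the orthogonality hypothesis of Theorem~\ref{th:radial}. Orthogonality was needed there only to pass from an element-wise inequality on the entries of $\tran{\err_t(\p_0)}\vect{U}$ to a bound on the norm of $\tran{\err_t(\p_0)}$ itself (via the fact that $\vect{U}^{-1}\tran{[\vect{U}^{-1}]}$ is diagonal). By instead keeping the argument at the level of individual entries, i.e.\ individual directional projections, one retains a valid bound in the general poised case, and the diameter-vs-radius inequality then closes the proof.
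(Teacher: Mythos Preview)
Your proposal is correct and follows essentially the same route as the paper: both extract the per-direction identity $\tran{\err_t(\p_0)}(\p_j-\p_0)=\tfrac{1}{2}\tran{(\p_j-\p_0)}\nabla^2 f(\bar{\p}_j)(\p_j-\p_0)$ from (\ref{equ:sop_d_c}), bound it via (\ref{eq:spect1}), and then use that $\|\p_j-\p_0\|\le 2r(\mathcal{U})$ since $\p_0$ and $\p_j$ lie on the circumsphere (the paper writes this last step explicitly as a triangle inequality through the center $\p_c$). Your observation that neither (\ref{equ:sop_d_c}) nor the circumradius identification requires orthogonality is exactly the point.
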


\begin{proof}
	$\ $ Taking the dot product of Equation~\eqref{equ:sop_d_c} with $(\p_j-\p_0)$, and noting that $\vect{U}^{-1}(\p_j-\p_0) = \vect{e}_j$, we get
	\begin{align}
		& \tran{\err_t(\p_0)}(\p_j-\p_0) = \frac{1}{2}\tran{(\p_j-\p_0)}\nabla^2f(\bar{\p}_j)(\p_j-\p_0),\qquad j=1,\dots,n_u. \label{eq:cor2a}
	\end{align}
	
	Combining \eqref{eq:cor2a} and \eqref{eq:spect1} gives
	\begin{align}
		& \bigl|\tran{\err_t(\p_0)}(\p_j-\p_0)\bigr| \leq \frac{L}{2}\|\p_j-\p_0\|^2 \qquad j=1,\dots,n_u. \label{eq:cor2b}
	\end{align}
	
	Hence, the projected gradient satisfies:
	\begin{align}
		\Bigl|\tran{\err_t(\p_0)}\frac{\p_j-\p_0}{\|\p_j-\p_0\|} \Bigr| &\leq \frac{L}{2}\|\p_j-\p_0\| = \frac{L}{2}\|(\p_j - \p_c) -  (\p_0 - \p_c)\| \nonumber\\
		&\leq \frac{L}{2}\bigl(\|\p_j - \p_c\| +  \|\p_0 - \p_c\|\bigr) = L r = T_r, \qquad j=1,\dots,n_u, \label{eq:cor2c}
	\end{align}
	and the proof is complete.
\end{proof}

\begin{Remark} \label{rmk:radialuniform}
The radial bound takes the same value, $T_r = L\cdot r(\mathcal{U})$, when evaluated at any of the $n_u+1$ points in the interpolation set. This is because the radius $r(\mathcal{U})$, which determines the bound, depends only on the geometry of the sample set $\mathcal{U}$ and not on which vertex is chosen as the base point for computing the simplex gradient.
\end{Remark}

\subsubsection{The Square Column Bound}
\label{ssec:origin}

We now introduce a novel upper bound on the gradient error due to truncation that uses the same assumptions as the delta bound and is also valid for any poised sample set.

\begin{Theorem} \label{th:colnorm}
	Let Assumptions~\ref{assum:differ}, \ref{assum:Lipschitz}, and \ref{assum:Poisedness} hold. Then
	\begin{align}
		& \|\vect{g}_t - \nabla f(\p_0)\| \leq T_c := \frac{L}{2} \bigl\lVert \ \tran{(\p_1-\p_0)}(\p_1-\p_0), \ \dots\ , \ \tran{(\p_{n_u}-\p_0)}(\p_{n_u}-\p_0) \ \bigr\rVert\|\vect{U}^{-1}\| \label{eq:colnormbound}
	\end{align}
\end{Theorem}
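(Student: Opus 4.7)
The plan is to mirror the proof of Theorem~\ref{th:delta} but to avoid the slack that the delta bound introduces when it replaces each $\|\p_j-\p_0\|^2$ by the uniform quantity $\Delta^2$. The key observation is that the Euclidean norm of the right-hand side of the vectorised Taylor-remainder inequality is exactly the factor appearing in $T_c$, so no further majorisation is needed.

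First, I would invoke Lemma~\ref{lemma:1} at each sample point $\p_j$, for $j=1,\dots,n_u$, to obtain
\[
|f(\p_j) - f(\p_0) - \tran{\nabla f(\p_0)}(\p_j-\p_0)| \leq \tfrac{L}{2}\|\p_j-\p_0\|^2.
\]
Substituting the interpolation identity $f(\p_j) = f(\p_0) + \tran{\vect{g}_t}(\p_j-\p_0)$, which holds in the noise-free case, yields
\[
\bigl|\tran{(\vect{g}_t - \nabla f(\p_0))}(\p_j-\p_0)\bigr| \leq \tfrac{L}{2}\|\p_j-\p_0\|^2, \qquad j=1,\dots,n_u.
\]

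Second, I would assemble these $n_u$ scalar inequalities into the element-wise row-vector inequality
\[
\bigl|\tran{(\vect{g}_t - \nabla f(\p_0))}\vect{U}\bigr| \leq \tfrac{L}{2}\bigl[\|\p_1-\p_0\|^2,\ \dots,\ \|\p_{n_u}-\p_0\|^2\bigr],
\]
and then take Euclidean norms, this time preserving the individual $\|\p_j-\p_0\|^2$ entries rather than replacing them by $\Delta^2$. This directly produces
\[
\bigl\|\tran{(\vect{g}_t - \nabla f(\p_0))}\vect{U}\bigr\| \leq \tfrac{L}{2}\,\bigl\lVert\tran{(\p_1-\p_0)}(\p_1-\p_0),\ \dots,\ \tran{(\p_{n_u}-\p_0)}(\p_{n_u}-\p_0)\bigr\rVert.
\]

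Finally, using $\vect{U}\vect{U}^{-1}=\vect{I}_{n_u}$ (which is legitimate by Assumption~\ref{assum:Poisedness}) together with submultiplicativity of the spectral norm gives
\[
\|\vect{g}_t - \nabla f(\p_0)\| = \bigl\|\tran{(\vect{g}_t - \nabla f(\p_0))}\vect{U}\vect{U}^{-1}\bigr\| \leq \|\vect{U}^{-1}\|\,\bigl\|\tran{(\vect{g}_t - \nabla f(\p_0))}\vect{U}\bigr\|,
\]
and chaining this with the previous display yields exactly \eqref{eq:colnormbound}. There is no real obstacle: the result is a sharpening of Theorem~\ref{th:delta} obtained by not collapsing the vector of squared column norms into its worst entry. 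The only point worth flagging is the elementary inequality $\sqrt{\sum_{j=1}^{n_u}\|\p_j-\p_0\|^4}\leq \sqrt{n_u}\,\Delta^2$, which shows $T_c\leq T_d$ and thus makes precise the claim in the introduction that the square column bound is less conservative than the delta bound, with equality iff all $\|\p_j-\p_0\|$ equal $\Delta$ (in particular, in the FFD arrangement).
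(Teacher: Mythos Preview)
Your proof is correct and follows essentially the same structure as the paper's: bound each component $\bigl|\tran{(\vect{g}_t-\nabla f(\p_0))}(\p_j-\p_0)\bigr|$ by $\tfrac{L}{2}\|\p_j-\p_0\|^2$, take the Euclidean norm of the resulting vector without collapsing the entries to $\Delta^2$, and finish via submultiplicativity with $\|\vect{U}^{-1}\|$. The only cosmetic difference is that you obtain the componentwise inequality from Lemma~\ref{lemma:1} (mirroring the delta-bound proof), whereas the paper reuses the second-order Taylor expansion \eqref{equ:sop_d_c} and the spectral estimate \eqref{eq:spect1} from the radial-bound proof; your route is marginally more elementary since it does not invoke the Hessian explicitly.
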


\begin{proof}
	$\ $Equation~\eqref{equ:sop_d_c} can be rewritten as
	\begin{align}
		& \tran{\err_t(\p_0)}(\p_j - \p_0) = \frac{1}{2}\tran{(\p_j-\p_0)}\nabla^2 f(\bar{\p}_j)(\p_j-\p_0),\qquad j=1,\dots,n_u, \nonumber
	\end{align}
	with $\bar{\p}_j = \p_0+\vartheta_j(\p_j-\p_0)$, for some $\vartheta_j \in [0,\ 1]$, for $j = 1,\dots,\np$. Taking the absolute value and using the inequalities \eqref{eq:spect1} gives
	\begin{align}
		& |\tran{\err_t(\p_0)}(\p_j - \p_0)| \leq 	\frac{L}{2}\tran{(\p_j-\p_0)}(\p_j-\p_0),\qquad j=1,\dots,n_u. \nonumber
	\end{align}
	
	Noting that $\tran{\err_t(\p_0)}(\p_j - \p_0)$ is the $j$th element of the row vector $\tran{\err_t(\p_0)}\vect{U}$ it follows that
	\begin{align}
		& \|\tran{\err_t(\p_0)}\vect{U}\| \leq \frac{L}{2} \bigl\lVert \ \tran{(\p_1-\p_0)}(\p_1-\p_0), \ \dots\ , \ \tran{(\p_{n_u}-\p_0)}(\p_{n_u}-\p_0) \ \bigr\rVert. \label{eq:normtbound}
	\end{align}
	
	Combining \eqref{eq:normtbound} and \eqref{eq:res2} gives \eqref{eq:colnormbound}.
\end{proof}

\vspace{0.3cm}

We will refer to the truncation gradient-error bound \eqref{eq:colnormbound} as the {\it square column bound}, since it includes the squared norms of the columns of $\vect{U}$, and we will denote it by $T_c$. 

\begin{Remark}[Square column bound for FFD] \label{rmk:FFDsquarecol}
	If, in Theorem~\ref{th:colnorm}, the points have the FFD arrangement $\p_j = \p_0 + h\vect{e}_j$, for $j = 1,\dots,n_u$, with the step size $h>0$, then we have $\tran{(\p_j-\p_0)}(\p_j-\p_0) = h^2$, $j=1,\dots,n_u$, and $\|\vect{U}^{-1}\| = 1/h$. Hence,
	\begin{align} 
		&  T_c =  \frac{L}{2} \bigl\lVert  \bigl[ \ h^2 \ h^2 \ \dots \ h^2 \ \bigr] \bigr\rVert \frac{1}{h} = \frac{L\sqrt{n_u}\ h}{2}. \nonumber
	\end{align}
\end{Remark}

Remark \ref{rmk:FFDsquarecol} shows that the square column bound is equal to both the delta bound and the radial bound for the FFD arrangement.
The following corollary proves that the square column bound is never more conservative than the delta bound.

\begin{Corollary}[Relation to the delta bound] \label{cor:equaldistant}
	Let Assumptions~\ref{assum:differ}, \ref{assum:Lipschitz}, and \ref{assum:Poisedness} hold. Then the error norm of the simplex gradient satisfies
	\begin{align}
		& \|\vect{g}_t - \nabla f(\p_0)\| \leq T_c \leq T_d, 
	\end{align}
	where $T_c = T_d$ if and only if the points $\p_j$, for $j = 1,\dots,n_u$, are all equally distant from $\p_0$.
\end{Corollary}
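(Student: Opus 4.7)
The plan is to directly compare the factors $\|[\|\p_1-\p_0\|^2,\dots,\|\p_{n_u}-\p_0\|^2]\|$ in $T_c$ and $\sqrt{n_u}\,\Delta^2$ in $T_d$, since the remaining factor $\frac{L}{2}\|\vect{U}^{-1}\|$ is shared once we rewrite $\|\hat{\vect{U}}^{-1}\|\Delta=\Delta\|\vect{U}^{-1}\|\cdot\Delta=\Delta^{2}\|\vect{U}^{-1}\|$ using $\hat{\vect{U}}=\vect{U}/\Delta$.

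First I would expand
\begin{align}
\bigl\lVert\bigl[\,\tran{(\p_1-\p_0)}(\p_1-\p_0),\ \dots,\ \tran{(\p_{n_u}-\p_0)}(\p_{n_u}-\p_0)\,\bigr]\bigr\rVert
=\sqrt{\sum_{j=1}^{n_u}\|\p_j-\p_0\|^{4}}. \nonumber
\end{align}
By the definition $\Delta=\max_{\p_j\in\mathcal{U}}\|\p_j-\p_0\|$ in \eqref{eq:deltadef}, every term satisfies $\|\p_j-\p_0\|^{4}\leq\Delta^{4}$, so
\begin{align}
\sqrt{\sum_{j=1}^{n_u}\|\p_j-\p_0\|^{4}}\ \leq\ \sqrt{n_u\,\Delta^{4}}\ =\ \sqrt{n_u}\,\Delta^{2}. \nonumber
\end{align}
Multiplying both sides by $\frac{L}{2}\|\vect{U}^{-1}\|$ and substituting $\Delta^{2}\|\vect{U}^{-1}\|=\Delta\|\hat{\vect{U}}^{-1}\|$ yields $T_c\leq T_d$, while $\|\vect{g}_t-\nabla f(\p_0)\|\leq T_c$ is Theorem~\ref{th:colnorm}.

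For the equality characterization, the inequality $\sum_{j=1}^{n_u}\|\p_j-\p_0\|^{4}\leq n_u\Delta^{4}$ is tight iff $\|\p_j-\p_0\|^{4}=\Delta^{4}$ for every $j=1,\dots,n_u$, equivalently $\|\p_j-\p_0\|=\Delta$ for every $j$, which is precisely the condition that all the vertices $\p_1,\dots,\p_{n_u}$ are equally distant from $\p_0$. I do not expect a serious obstacle here: the argument is essentially the standard bound $\|\vect{v}\|_{2}\leq\sqrt{n_u}\,\|\vect{v}\|_{\infty}$ applied to the vector of squared radii, together with careful bookkeeping of the normalization $\hat{\vect{U}}=\vect{U}/\Delta$.
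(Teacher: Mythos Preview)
Your proposal is correct and follows essentially the same route as the paper's proof: compare the vector of squared radii componentwise to the constant vector $(\Delta^{2},\dots,\Delta^{2})$, use the shared factor $\tfrac{L}{2}\|\vect{U}^{-1}\|$ (via $\|\hat{\vect{U}}^{-1}\|=\Delta\|\vect{U}^{-1}\|$), and read off the equality case from strictness of the componentwise bound. Your explicit framing as $\|\vect{v}\|_{2}\leq\sqrt{n_u}\,\|\vect{v}\|_{\infty}$ is a nice way to summarize the same inequality.
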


\begin{proof}
	From the definition of the square column bound in \eqref{eq:colnormbound}, we have:
	\begin{align} 
		T_c & =  \frac{L}{2} \Bigl\lVert \|\p_1-\p_0\|^2, \ \dots\ , \ \|\p_{n_u}-\p_0\|^2  \Bigr\rVert\|\vect{U}^{-1}\|. \nonumber
	\end{align}
	
	By definition of $\Delta$ in \eqref{eq:deltadef}, it follows that
	\begin{align}
		& \|\p_j-\p_0\|\leq\Delta\quad\Rightarrow \quad \|\p_j-\p_0\|^2\leq\Delta^2, \quad \text{\rm for all}\ j. \nonumber
	\end{align}

	Therefore,
	\begin{align}
		& T_c \leq \frac{L}{2} \bigl\lVert  \Delta^2 \ \Delta^2 \ \dots \ \Delta^2  \bigl\rVert \|\vect{U}^{-1}\| =
		\|\vect{U}^{-1}\|\sqrt{n_u}\frac{L}{2}\Delta^2 = \|\hat{\vect{U}}^{-1}\|\sqrt{n_u}\frac{L}{2}\Delta = T_d. \nonumber
	\end{align}
	
	Now, assume that all points $\p_j$ are equally distant from $\p_0$, i.e., $\|\p_j-\p_0\| = \Delta$ for all $j$. Then,
	\begin{align} 
		&  T_c =  \frac{L}{2} \bigl\lVert \Delta^2 \ \Delta^2 \ \dots \ \Delta^2 \bigl\rVert \|\vect{U}^{-1}\| = T_d, \nonumber
	\end{align}
	which proves the ``if" part. 
	
	Conversely, if the distances $\|\p_j-\p_0\|$ are not equal, then at least one is strictly less than $\Delta$, and hence
	\begin{align}
	& \Bigl\lVert \|\p_1-\p_0\|^2, \ \dots\ , \ \|\p_{n_u}-\p_0\|^2 \Bigr\rVert < \bigl\lVert  \Delta^2 \ \Delta^2 \ \dots \ \Delta^2 \bigl\rVert, \nonumber
	\end{align}
	implying $T_c < T_d$, which completes the proof.
\end{proof}

\subsubsection{Minimum Vertex Gradient-Error Bounds}

Many DFO methods are based on function evaluations over sequences of simplices \cite{Bortz:1998}. 
At each iteration, one point of the simplex is replaced to generate a new simplex. Consequently, we argue that convergence to a neighborhood of an optimal point occurs with the last simplex, rather than with the last iterate. Since the simplex gradient provides an estimate at any vertex of the simplex, it is sufficient to ensure gradient accuracy at at least one of these vertices.

With this in mind, we define the Minimum Vertex Gradient Error Bound as the smallest bound obtained over all the vertices of the simplex. Specifically, the {\it minimum vertex delta bound} is defined as
\begin{align}
	& T_{rv} = \min\{T_r(\p_0),\ T_r(\p_1),\ \dots,\ T_r(\p_{n_u})\}, \nonumber
\end{align}
and the {\it minimum vertex square column bound} as
\begin{align}
	& T_{cv} = \min\{T_c(\p_0),\ T_c(\p_1),\ \dots,\ T_c(\p_{n_u})\}. \nonumber
\end{align}

These bounds guarantee that the simplex gradient error is controlled at least at one vertex of the interpolation set.

\subsubsection{Examples}

\begin{Example} \label{ex:1}
	Consider the function
	\begin{align}
		& f(\p) = 2 u_1^2 - u_1u_2 + u_2^2 - 2u_1 + 1.4^{(2u_1 + u_2)}, \nonumber
	\end{align}
	with $\p = \tran{[u_1,\ u_2]}$. A gradient Lipschitz constant that is valid in $\mathcal{Q}=\{\p\in\mathbb{R}^2:\ 0\leq u_1\leq 1,\ 0\leq u_2 \leq 1\}$ is $L = 5.3$. 
	The simplex gradient is computed using the points $(\p_0,\ f(\p_0))$, $(\p_1,\ f(\p_1))$, and $(\p_2,\ f(\p_2))$, where $\p_0 = \tran{[0.5,\ 0]}$, $\p_1 = \tran{[0,\ 1]}$, and $\p_2 = \tran{[1,\ 0]}$. This yields the gradient estimate $\vect{g}_t = \tran{[1.12,\ 2.56]}$. The radius defined by these three points is $r = 0.7906$. Table~\ref{table:Ex1} reports the gradient error norm as well as the delta bound, the square column bound, and the radial bound evaluated at $\p_0$, $\p_1$, and $\p_2$. Both the delta and square column bounds provide distinct highly conservative upper bounds at each point, particularly at $\p_1$. However, the square column bound is consistently less conservative than the delta bound. In contrast, the radial bound provides the same upper bound value at all three points—i.e., a uniform bound—which is also significantly less conservative. Notably, this holds even though the matrix $\vect{U}$ is not orthogonal at any of the points.
\end{Example}

Example~\ref{ex:1} illustrates a common scenario in which the radial bound, while not guaranteed to upper bound the gradient error norm, often does so in practice, whereas the delta and square column bounds, which are valid upper bounds by construction, tend to yield significantly more conservative results.
Next, Example~\ref{ex:2} illustrates Corollary~\ref{cor:equaldistant}, and presents a case where the radial bound fails to upper bound  the gradient error.

\begin{table}[ht]
	\caption{Evaluation of truncation gradient error bounds in Example~\ref{ex:1}} 
	\centering 
	\begin{tabular}{c c c c c} 
		\hline 
		& $\|\err_t(\p_i)\|$ & $T_d$ & $T_c$ & $T_r$ \\ [0.3ex] 
		\hline 
		$\p_0$ & 2.8443 & 10.72 & 7.73 & 4.19 \\ 
		$\p_1$ & 4.1788 & 26.7 & 22.26 & 4.19 \\
		$\p_2$ & 3.1386 & 21.89 & 15.6 & 4.19 \\  
		\hline 
	\end{tabular}
	\label{table:Ex1} 
\end{table}

\begin{Example} \label{ex:2}
	Consider the function
	\begin{align}
		& f(\p) = u_1^2 + 6 u_2, \nonumber
	\end{align}
	with $\p = \tran{[u_1,\ u_2]}$. For this function, the gradient is Lipschitz continuous with Lipschitz constant $L=2$. 
	The simplex gradient is computed using the points $(\p_0,\ f(\p_0))$, $(\p_1,\ f(\p_1))$, and $(\p_2,\ f(\p_2))$, where $\p_0 = \tran{[0,\ \theta]}$, $\theta\in\mathbb{R}\setminus \{0.5\}$, $\p_1 = \tran{[-1,\ 0.5]}$, and $\p_2 = \tran{[1,\ 0.5]}$. With this configuration, the square column bound $T_c$ and the delta bound $T_d$ are equal at $\p_0$, since the points $\p_1$ and $\p_2$ are equidistant from $\p_0$ (see Corollary~\ref{cor:equaldistant}). Evaluating
	\begin{align}
		& \vect{U}^{-1} = \left[
		\begin{array}{rr}
			-0.5 & -\frac{1}{2\theta-1}  \\
			0.5  & -\frac{1}{2\theta-1} 
		\end{array}
		\right],\qquad \text{and} \quad \Delta^2 = \bigl(\theta-\frac{1}{2}\bigr)^2 +1, \nonumber
	\end{align}
	the radial bound as a function of $\theta$ becomes
	\begin{align}
		& T_r = \frac{2(\theta^2 - \theta + \frac{5}{4})}{|2\theta-1|}. \nonumber
	\end{align}

	Next, compute 
	\begin{align}
		& \tran{(\vect{U}^{-1})}\vect{U}^{-1} = \left[
		\begin{array}{lc}
			0.5 & 0  \\
			0   & \frac{2}{(2\theta-1)^2} 
		\end{array}
		\right], \nonumber
	\end{align}
	from which it follows that the spectral norm of $\vect{U}^{-1}$ depends on the value of $\theta$ as:
	\begin{equation} 
		\begin{array}{ll}
		\|\vect{U}^{-1}\| = \frac{\sqrt{2}}{|2\theta-1|}, & \text{if}\ \theta\in\mathcal{S},\\
		\|\vect{U}^{-1}\| = \frac{1}{\sqrt{2}}, & \text{if}\ \theta\notin\mathcal{S},
		\end{array} \nonumber
	\end{equation}
	where $\mathcal{S} = \bigl\{x\in [-0.5,\ 1.5]\setminus \{0.5\} \bigr\}$. Hence, the square column bound $T_c$ is given by:
	\begin{equation} 
		\begin{array}{ll}
			T_c = T_r, & \text{if}\ \theta\in\mathcal{S},\\
			T_c = \theta^2 - \theta + \frac{5}{4}, & \text{if}\ \theta\notin\mathcal{S}.
		\end{array} \nonumber
	\end{equation}
	
	In particular, setting $\theta = \frac{2}{3}$ results in a simplex gradient estimate $\vect{g}_t = \vect{0}$, which is a poor approximation of the true gradient. The radius defined by the three points is $r = 3.0833$.
	Table~\ref{table:Ex2} reports the gradient error norm, along with the delta, square column, and radial bounds, each evaluated at $\p_0$, $\p_1$, and $\p_2$.
	
	At $\p_0$, all three bounds yield the same value of 6.1667, which slightly overestimates the actual gradient error norm of 6. The equality $T_c = T_d$ holds because $\p_1$ and $\p_2$ are equidistant from $\p_0$, while $T_c = T_r$ follows from the fact that $\theta = \frac{2}{3} \in \mathcal{S}$.
	 
	At $\p_1$ and $\p_2$, the gradient error norm is 6.3246. The square column bound at both points is 27.72—still conservative, but less so than the delta bound of 37.97. In contrast, the radial bound fails to upper bound the gradient error at these two points, illustrating that it may not be reliable when the columns of $\vect{U}$ are not orthogonal. Nonetheless, it remains significantly less conservative and much closer to the actual error compared to the other two bounds.
\end{Example}

\begin{table}[ht]
	\caption{Evaluation of truncation gradient error bounds in Example~\ref{ex:2}, with $\theta = 2/3$.} 
	\centering 
	\begin{tabular}{c c c c c} 
		\hline 
		& $\|\err_t(\p_i)\|$ & $T_d$ & $T_c$ & $T_r$ \\ [0.3ex] 
		\hline 
		$\p_0$ & 6      & 6.1667 & 6.1667 & 6.1667 \\ 
		$\p_1$ & 6.3246 & 37,97 & 27.72  & 6.1667 \\
		$\p_2$ & 6.3246 & 37,97 & 27.72  & 6.1667 \\  
		\hline 
	\end{tabular}
	\label{table:Ex2} 
\end{table}

Example \ref{ex:3} illustrates how the delta bound can become increasingly conservative compared to the square column and radial bounds as the dimension of the domain, $n_u$, increases.

\begin{Example} \label{ex:3}
	Consider the following set of interpolation points:
	\begin{align}
		& \p_0 = \vect{0}, \qquad \p_1 = 4\vect{e}_1, \qquad
		\p_j = \vect{e}_j, \quad j=2,\dots,n_u. \nonumber
	\end{align}
	
	With this configuration, the matrix $\vect{U}$ is orthogonal. As a result, the radial bound provides a valid  upper bound on the gradient error norm at $\p_0$. Observe that:
	\begin{align}
		& \bigl[ \ \tran{(\p_1-\p_0)}(\p_1-\p_0), \ \dots\ , \ \tran{(\p_{n_u}-\p_0)}(\p_{n_u}-\p_0) \ \bigr] = [4^2,\ 1,\ \dots\ ,\ 1], \nonumber
	\end{align}
	and that $\vect{U}^{-1} = \text{\rm diag}\{\frac{1}{4},\ 1,\ 1,\ \dots\ ,\ 1\}$. The corresponding radial bound is:
	\begin{align}
		& T_r = \frac{L}{2}\sqrt{4^2 + n_u - 1}. \nonumber
	\end{align} 
	
	On the other hand, since $\|\vect{U}^{-1}\| = 1$ and $\Delta = 4$ for all $n_u \geq 2$, the delta and square column bounds become:
	\begin{align}
		& T_d = \frac{L}{2} \cdot 4^2 \cdot \sqrt{n_u}, \nonumber\\
		& T_c = \frac{L}{2}\sqrt{4^4 + n_u - 1}. \nonumber
	\end{align}
	
	Figure~\ref{fig:ex3} plots the three bounds as functions of $n_u$. While all bounds increase with the domain dimension, the delta bound grows significantly faster than the others, quickly becoming overly conservative in higher dimensions. Among the three, the radial bound remains the least conservative throughout.
\end{Example}

\begin{figure}
	\centering \includegraphics[width=7cm]{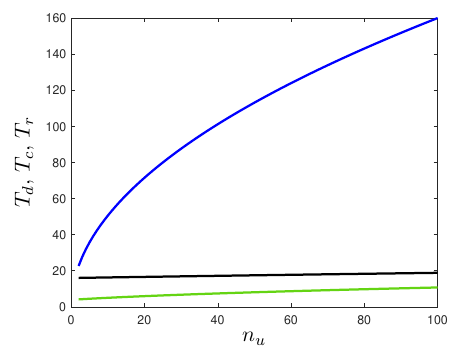} 
	
	\caption{Truncation gradient error bounds as a function of the dimension size in Example~\ref{ex:3}. \underline{Blue}: $T_d$. \underline{Black}: $T_c$. \underline{Green}: $T_r$.} 
	\label{fig:ex3}
\end{figure}

\subsection{Gradient Error due to Measurement Noise}
\label{ssec:noise}

Let us start by giving a geometrical characterization of the gradient error $\err_n$, which is defined in 
\eqref{equ:sop_s}. Let the vector $\en_{\text{x}} \in\Rn{\np+1}$ be normal to the $\np$-dimensional plane generated by the points
$(\tran{\p_0}, \ru_0)$, $(\tran{\p_1}, \ru_1)$, $\dots$, $(\tran{\p_{n_u}}, \ru_{n_u})$:
\begin{align}
	\left[ \begin{array}{cc}
		\tran{\p_1} - \tran{\p_0}, & \ru_1-\ru_0 \\
		\tran{\p_2} - \tran{\p_0}, & \ru_2-\ru_0  \\
		\vdots & \vdots  \\
		\tran{\p_{n_u}} - \tran{\p_0}, & \ru_{n_u}-\ru_0 \end{array} \right] \en_{\text{x}} = \vect{0}. \label{equ:normalx}
\end{align}

The normal vector $\en_{\text{x}} \in\Rn{\np+1}$ can be obtained, for instance, by singular value decomposition of the $\np\times (\np+1)$ matrix on the left side of \eqref{equ:normalx}, which is of rank $\np$ due to Assumption~\ref{assum:Poisedness}. Next, we define $\en_{\text{v}}$ as the unit vector that is normal to the points 
$(\tran{\p_0},\ 0)$, $(\tran{\p_1},\ 0)$, $\dots$, $(\tran{\p_{n_u}},\ 0)$, that is, $\en_{\text{v}}=\tran{(0,\ \dots\ ,\ 0,\ 1)}$. The angle $\alpha$ between $\en_{\text{x}}$ and $\en_{\text{v}}$ is given by
\begin{align}
	\alpha = \text{acos} \biggl( \frac{\tran{\en_{\text{v}}}\en_{\text{x}}}{\| \en_{\text{v}} \| \| \en_{\text{x}} \|} \biggr).  \label{equ:alfa}
\end{align}
In particular, by dividing each element of the vector $\en_{\text{x}}$ by the last one, $\en_{\text{x}}$ can be chosen as $\en_{\text{x}}=\tran{[\tran{\bar{\en}}\quad1]}$, $\bar{\en} \in\Rn{\np}$. With this
\begin{align}
	\alpha = \text{acos} \biggl( \frac{1}{\sqrt{\tran{\bar{\en}}\bar{\en}+1}} \biggr),  \label{equ:alfa2}
\end{align}
and from \eqref{equ:normalx}, we get
\begin{align}
	& \tran{\vect{U}} \bar{\en} + \tran{[\ 
		\ru_1 -\ru_0,\ \ru_2-\ru_0,\ \dots\ ,\  \ru_{n_u}-\ru_0\ ]} = \vect{0}. \label{eq:aux2}
\end{align}
It follows from \eqref{equ:sop_s} and \eqref{eq:aux2} that $\bar{\en} = - \err_n$, and \eqref{equ:alfa2} gives
\begin{align}
	\alpha = \text{acos} \Biggl( \frac{1}{\sqrt{\| \err_n \|^2+1}} \Biggr).  \label{equ:alfa3}
\end{align}

The vectors $\en_{\text{v}}$, $\en_{\text{x}}$ and $\err_n$ are represented in Figure~\ref{fig:nv} for the two-dimensional case ($\np=2$). Independently of the number of inputs, a plane $\mathcal{P}$ can be defined that contains the three vectors $\en_{\text{v}}$, $\en_{\text{x}}$ and $\err_n$. Notice that since $\err_n$ belongs to the input space, its component in the direction of $\ru$ is always zero. The relation \eqref{equ:alfa3}, which is represented in Figure \ref{fig:angulo}, takes place on the plane $\mathcal{P}$.
\noindent 
\begin{minipage}[t]{0.48\textwidth} 
	\centering
	\begin{figure}[H] 
		
		\includegraphics[width=7.5cm]{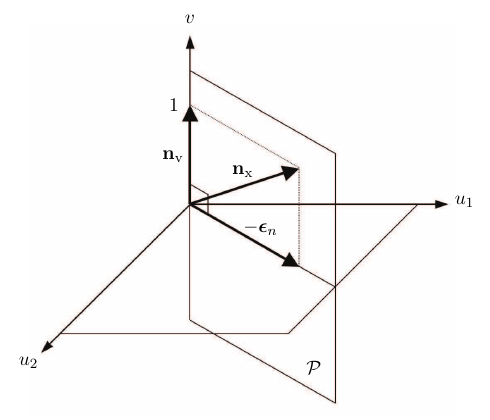} 
		\caption{Geometrical representation of $\err_n$ in the two-dimensional case.\vspace{0.4cm}}
		\label{fig:nv} 
	\end{figure}
\end{minipage}
\hfill 
\begin{minipage}[t]{0.48\textwidth} 
	\centering
	\begin{figure}[H] 
		
		\centering
		\includegraphics[width=4.5cm]{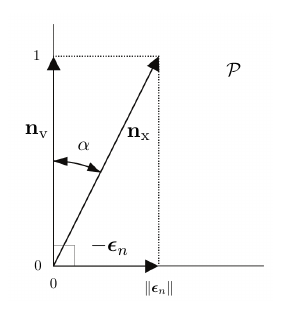} 
		\caption{Geometrical representation of $\err_n$ on the plane $\mathcal{P}$.}
		\label{fig:angulo} 
	\end{figure}
\end{minipage}

Next, we consider two different upper bounds on the norm of the gradient error due to measurement noise.

\subsubsection{The Conditioning Bound}
\label{ssec:originoise}

We first consider the bound on $\|\err_n\|$ that was recently presented in \cite{Berahas:2022}. We will refer to this bound as the {\it conditioning bound}, and we will denote it by $N_c$.

\begin{Theorem} \label{th:distortion}
	Let Assumptions~\ref{assum:boundednoise} and \ref{assum:Poisedness} hold. Then,
	\begin{align}
		& \|\err_n\| \leq N_c := \frac{2\delta\sqrt{n_u}\|\hat{\vect{U}}^{-1}\|}{\Delta}, \label{eq:distortionbound}
	\end{align}
	with $\Delta$ and $\|\hat{\vect{U}}^{-1}\|$ as defined in \eqref{eq:deltadef}.
\end{Theorem}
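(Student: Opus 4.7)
The plan is to derive the bound directly from the definition of $\err_n$ in \eqref{equ:sop_s} by applying the submultiplicativity of the spectral norm together with the bounded noise assumption. Let $\vect{y}_n := \tran{[\ \ru_1-\ru_0,\ \ldots,\ \ru_{n_u}-\ru_0\ ]}$ so that $\err_n = \tran{[\vect{U}^{-1}]}\vect{y}_n$.

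First I would apply the inequality $\|\err_n\| \leq \|\tran{[\vect{U}^{-1}]}\|\cdot\|\vect{y}_n\|$, noting that the spectral norm is invariant under transposition, so $\|\tran{[\vect{U}^{-1}]}\| = \|\vect{U}^{-1}\|$. Next, under Assumption~\ref{assum:boundednoise} each entry of $\vect{y}_n$ satisfies $|\ru_j-\ru_0|\leq |\ru_j|+|\ru_0| \leq 2\delta$ by the triangle inequality, and hence $\|\vect{y}_n\| \leq 2\delta\sqrt{n_u}$. Combining these two bounds yields
\begin{align}
& \|\err_n\| \leq 2\delta\sqrt{n_u}\,\|\vect{U}^{-1}\|. \nonumber
\end{align}

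Finally, I would rewrite this in terms of the scaled matrix $\hat{\vect{U}}$. Since $\hat{\vect{U}} = \vect{U}/\Delta$, we have $\hat{\vect{U}}^{-1} = \Delta\,\vect{U}^{-1}$, which gives $\|\vect{U}^{-1}\| = \|\hat{\vect{U}}^{-1}\|/\Delta$. Substituting this into the previous inequality produces exactly the claimed bound \eqref{eq:distortionbound}. There is no real obstacle here: the argument is a direct norm estimate, and the only point requiring care is the correct bookkeeping between $\vect{U}$ and the scaled $\hat{\vect{U}}$, which parallels the treatment already used for the delta bound in Theorem~\ref{th:delta}.
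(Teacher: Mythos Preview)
Your proof is correct and follows essentially the same approach as the paper: bound each component $|\ru_j-\ru_0|\le 2\delta$ to get $\|\vect{y}_n\|\le 2\delta\sqrt{n_u}$, apply submultiplicativity with $\|\vect{U}^{-1}\|$, and then rewrite via $\hat{\vect{U}}^{-1}=\Delta\,\vect{U}^{-1}$. The only cosmetic difference is that the paper writes the intermediate step as $\|\err_n\|=\|\tran{\err_n}\vect{U}\vect{U}^{-1}\|\le\|\tran{\err_n}\vect{U}\|\,\|\vect{U}^{-1}\|$, whereas you apply submultiplicativity directly to $\err_n=\tran{[\vect{U}^{-1}]}\vect{y}_n$.
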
 

\begin{proof}
	From \eqref{equ:sop_s}, we have
	\begin{align}
		& \tran{\err_n}\vect{U} = [\ \ru_1-\ru_0, \quad \dots\ , \quad \ru_{n_u}-\ru_0\mbox{ }]. \nonumber
	\end{align}
	
	Hence, we can write the following component-wise inequality
	\begin{align}
		& |\tran{\err_n}\vect{U}| \leq 2\delta [\ 1,\ \dots\ , \ 1\ ]. \nonumber
	\end{align}
	
	Taking the norm gives
	\begin{align}
		& \|\tran{\err_n}\vect{U}\| \leq 2\delta\sqrt{n_u}. \nonumber
	\end{align}
	
	Hence,
	\begin{align}
		&  \|\err_n\| = \|\tran{\err_n}\vect{U}\vect{U}^{-1}\| \leq \|\tran{\err_n}\vect{U}\|\|\vect{U}^{-1}\| \leq  2\delta\sqrt{n_u}\|\vect{U}^{-1}\|, \nonumber
	\end{align}
	and \eqref{eq:distortionbound} follows from  $\hat{\vect{U}}^{-1} = \Delta\vect{U}^{-1}$.
\end{proof}

Note that the conditioning bound can be computed as $N_c = 2\delta\sqrt{n_u}\|\vect{U}^{-1}\|$, without evaluating the value of $\Delta$. However, writing the bound as in \eqref{eq:distortionbound} shows the effect that scaling of $\Delta$ has on the gradient error bound.

\begin{Remark}[Conditioning bound for FFD] \label{rmk:FFDconditioning}
	Let the sample points in $\mathcal{U}$ have the arrangement used in FFD with step size $h>0$. Then, the gradient-error bound in Theorem~\ref{th:distortion} reduces to
	\begin{align}
		& \|\err_n\| \leq  \frac{2\delta\sqrt{n_u}}{h}. \label{eq:distortionboundFFD}
	\end{align}
\end{Remark}

\subsubsection{The L-min Bound}
\label{ssec:lmin_bound}

In the case of bounded measurement noise, the gradient-error norm $\|\err_n\|$ is bounded by the largest possible gradient-error norm, corresponding to the worst-case scenario for the measurement noise,
\begin{align}
	& \|\err_n\| \leq \|\err_n\|_{\rm max}, \nonumber
\end{align}
with
\begin{align}
	\|\err_n\|_{\rm max} = \max_{\vect{v}} \quad & \|\err_n\|  \label{eq:errn_max}\\
	\text{\rm s.t.} \quad & -\delta \leq \nu_j \leq \delta, \quad j = 0,1,\dots,n_u, \nonumber
\end{align}
with the decision variables $\vect{v}=\tran{[\ru_0,\  \ru_1,\ \dots\ ,\ \ru_{\np}]}$.
The approach for computing $\|\err_n\|_{\rm max}$ was presented in \cite{Marchetti:2010}. Since only a sketch of the proof is given in \cite{Marchetti:2010}, the complete proof is provided in Theorem~\ref{th:lmin} below. The concept of complement affine subspaces is required, for which a detailed description is given in Appendix~\ref{app:affine_subspaces}.

\begin{Theorem} \label{th:lmin}
	Let Assumptions~\ref{assum:boundednoise} and \ref{assum:Poisedness} hold. Then,
	\begin{align}
		& \|\err_n\| \leq N_l = \frac{2\delta}{l_{\text{\rm min}}}, \label{eq:lminbound}
	\end{align}
	where $N_l := \|\err_n\|_{\rm max}$, and $l_{\text{\rm min}}$ is the shortest distance between all the complement affine subspaces that can be generated from the sample points in $\mathcal{U}$.
\end{Theorem}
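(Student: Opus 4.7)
The plan is to recast the continuous maximization \eqref{eq:errn_max} as a combinatorial one over sign patterns, then give each such pattern a geometric meaning via the tilt of the interpolating hyperplane, and finally recognize $l_{\min}$ as the minimum separation between two complement affine subspaces.

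First I would reduce to the vertices of the feasible box. Because $\err_n$ is affine in $\vect{v}$ by \eqref{equ:sop_s}, the objective $\|\err_n\|$ is convex on the box $\{-\delta\le \nu_j\le \delta\}$, so its maximum is attained at a vertex where $\nu_j=\sigma_j\delta$ with $\sigma_j\in\{-1,+1\}$. The trivial patterns $\sigma\equiv+1$ and $\sigma\equiv-1$ yield $\err_n=\vect{0}$, so the search reduces to non-trivial sign patterns. For each such $\sigma$, define $J^{+}=\{j:\sigma_j=+1\}$, $J^{-}=\{j:\sigma_j=-1\}$, and let $A^{\pm}=\text{aff}\{\p_j:j\in J^{\pm}\}$. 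The corresponding $\err_n(\sigma)$ is the gradient of the affine interpolant $m_\sigma(\p)=c_\sigma+\tran{\err_n}\p$ that passes through every $(\p_j,\sigma_j\delta)$, so $A^{+}$ is contained in the hyperplane $\{\p:\tran{\err_n}\p=\delta-c_\sigma\}$ and $A^{-}$ in the parallel hyperplane $\{\p:\tran{\err_n}\p=-\delta-c_\sigma\}$. This identifies $(A^{+},A^{-})$ as the pair of complement affine subspaces attached to $\sigma$.

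Next I would show that $d(A^{+},A^{-})=2\delta/\|\err_n(\sigma)\|$. The linear subspaces $V^{\pm}$ parallel to $A^{\pm}$ both lie in $\err_n^{\perp}$, so $\err_n\in(V^{+}+V^{-})^{\perp}$. The poisedness Assumption~\ref{assum:Poisedness} forces the vectors $\{\p_j-\p_{j_0}:j\in J^{+}\setminus\{j_0\}\}\cup\{\p_j-\p_{j_1}:j\in J^{-}\setminus\{j_1\}\}$ to be linearly independent for any chosen $j_0\in J^{+}$ and $j_1\in J^{-}$, giving $\dim(V^{+}+V^{-})=(|J^{+}|-1)+(|J^{-}|-1)=n_u-1$. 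Hence $(V^{+}+V^{-})^{\perp}=\text{span}(\err_n)$, and the distance between $A^{+}$ and $A^{-}$ is obtained by projecting the joining vector $\p_{j_0}-\p_{j_1}$ onto $\err_n/\|\err_n\|$, producing $|\tran{\err_n}(\p_{j_0}-\p_{j_1})|/\|\err_n\|=2\delta/\|\err_n(\sigma)\|$.

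Rearranging yields $\|\err_n(\sigma)\|=2\delta/d(A^{+},A^{-})$, so maximizing $\|\err_n\|$ over sign patterns is equivalent to minimizing $d(A^{+},A^{-})$ over all non-trivial partitions of $\mathcal{U}$. By the definition of $l_{\min}$ in Appendix~\ref{app:affine_subspaces} as the smallest such separation between complement affine subspaces generated from $\mathcal{U}$, we conclude $\|\err_n\|_{\max}=2\delta/l_{\min}$, which is \eqref{eq:lminbound}. I expect the main obstacle to be the dimension count that forces $(V^{+}+V^{-})^{\perp}$ to be one-dimensional: this is what makes the distance formula collapse to the clean expression $2\delta/\|\err_n\|$ rather than a lower bound, and it requires a careful use of poisedness together with the verification that the appendix's notion of ``complement affine subspaces'' matches the partition-based construction used here.
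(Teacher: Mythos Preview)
Your proposal is correct and follows essentially the same two-step strategy as the paper: reduce the box-constrained maximization to vertex sign patterns by convexity, then show that for each non-trivial pattern the noise-gradient $\err_n(\sigma)$ is the common normal to the two complement affine subspaces and satisfies $\|\err_n(\sigma)\|=2\delta/l_{AC}$. The only cosmetic difference is that the paper carries out Step~2 via an explicit change of basis $\vect{U}_{AC}$ and the matrix identity \eqref{equ:Uac2}, whereas you arrive at the same orthogonality and distance formula through the affine-interpolant picture and a dimension count on $V^{+}+V^{-}$; both routes lead to the same conclusion.
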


\begin{proof}
	The proof includes two steps.
	
	\textbf{1.} 
	Maximizing $\|\err_n\|^2$ instead of $\|\err_n\|$ does not change the optimal solution of Problem~\eqref{eq:errn_max}. From \eqref{equ:sop_s}, we have
	\begin{align}
		& \|\err_n\|^2 =  [\ \ru_1-\ru_0, \quad \dots\ , \quad \ru_{n_u}-\ru_0\mbox{ }]\ \vect{U}^{-1}\tran{[\vect{U}^{-1}]}\tran{[\ \ru_1-\ru_0, \quad \dots\ , \quad \ru_{n_u}-\ru_0\mbox{ }]}. \nonumber 
	\end{align}
	
	Upon defining the variables $x_i = \ru_i - \ru_0$, for $i=1,\dots,n_u$, Problem~\eqref{eq:errn_max} can be reformulated as
	\begin{align}
		\|\err_n\|_{\rm max}^2 = \max_{\vect{x}} \quad & \tran{\vect{x}}\vect{R}\ \vect{x} \label{eq:worstcase_opt2}
		\\
		\text{s.t.} \quad & -2\delta \leq x_i \leq 2\delta, \quad i=1,\dots,\np, \nonumber
	\end{align}
	where $\vect{R}=\vect{U}^{-1}\tran{[\vect{U}^{-1}]}$ is positive definite due to Assumption~\ref{assum:Poisedness}.  It is well known that a global optimum of Problem \eqref{eq:worstcase_opt2} is reached at the intersection of the input bounds (see e.g. Theorem~32.1 in \cite{Rockafellar:97}).
	This means that, at the solution to Problem~\eqref{eq:worstcase_opt2}, there is a proper subset of points
	$\mathcal{U}_A \subsetneq \mathcal{U}$ for which the error is $\ru_A = \delta$, and a complement subset of points $\mathcal{U}_C := \mathcal{U}\setminus\mathcal{U}_A$, for which the error is $\ru_C = -\delta$ (or  $\ru_A = -\delta$ and $\ru_C = \delta$). 
	
	\textbf{2. }Next, we show that, if for any pair of complement affine subspaces, all the points in $\mathcal{U}_A$ have the error $\ru_A =\delta$ and all the points in $\mathcal{U}_C$ have the error $\ru_C = -\delta$ (or $-\delta$ and $\delta$ respectively), then the error vector $\err_n$ is normal to both complement affine subspaces. Let us consider the sets $\mathcal{U}_A = \{\p_0, \p_1,\dots,\p_{\np^A-1}\}$ and $\mathcal{U}_C = \{\p_{\np^A},\dots,\p_{\np}\}$.
	Since the simplex gradient is independent of the basis used, the gradient error $\err_n$ in \eqref{equ:sop_s} can also be written as
	\begin{align}
		& \tran{\err_n} = [\ \ru_1-\ru_0, \ \dots\ , \ \ru_{\np^A-1}-\ru_0,\  \ru_{\np^A+1}-\ru_{\np^A}, \ \dots\ , \ \ru_{\np}-\ru_{\np^A},\ \ru_{\np^A-1}-\ru_{\np^A} \ ] \ \vect{U}^{-1}_{AC} \label{equ:sop_s_proof} 
	\end{align}
	with 
	\begin{align} 
		& \vect{U}_{AC} = [\ \p_1-\p_0,\ \dots\ , \ \p_{\np^A-1}-\p_0,\ \ \p_{\np^A+1}-\p_{\np^A},\ \dots\ , \p_{\np}-\p_{\np^A},\ \p_{\np^A-1}-\p_{\np^A} \ ].
	\end{align}
	
	Taking the error $\ru_A =\delta$ for all the points in $\mathcal{U}_A$ and the error error $\ru_C = -\delta$ for all the points in $\mathcal{U}_C$, equation \eqref{equ:sop_s_proof} becomes
	\begin{align}
		& \tran{(\err_n^{AC})} = [\ 0, \ \dots\ , \ 0,\ 2\delta \ ] \ \vect{U}^{-1}_{AC}. \label{equ:sop_s_proof2} 
	\end{align}

	Using the matrix $\vect{Q}$ defined in \eqref{equ:calcnormal} in $\vect{U}_{AC}$, \eqref{equ:sop_s_proof2} becomes:
	\begin{align}
		 \left[ \begin{array}{c}
				\vect{Q} \\
				\tran{(\p_{\np^A-1}- \p_{\np^A})} \end{array} \right] \err_n^{AC}  = \left[ \begin{array}{c} {\bf 0}_{\np-1} \\ 2\delta \end{array} \right], \label{equ:Uac2}
	\end{align}
	from where $\vect{Q}\err_n^{AC} = \vect{0}$, and $\err_n^{AC}$ is orthogonal to both complement affine subspaces.
	
	From \eqref{equ:Uac2} and \eqref{equ:calcdist}, one has
	\begin{align}
		& \bigl\lvert \tran{(\err_n^{AC})} (\p_{\np^A-1} - \p_{\np^A})\bigr\rvert = l_{AC}\| \err_n \|_{AC} =  2\delta, \nonumber\\
		& \| \err_n \|_{AC} = \frac{2\delta}{l_{AC}}, \label{equ:Uac3}
	\end{align}
	where $l_{AC}$ is the distance between the two complement affine subspaces. 
	From all possible complement subsets $\mathcal{U}_A$ and $\mathcal{U}_C$, $\| \err_n \|_{\text{max}}$ occurs for the complement subsets that correspond to the nearest complement affine subspaces, that is, for $l_{AC}=l_{\text{min}}$. It follows that $\| \err_n \|_{\text{max}} = \frac{2\delta}{l_{\text{min}}}$, which completes the proof.
\end{proof}
\noindent 
\begin{minipage}[t]{0.52\textwidth} 
	\centering
	\begin{figure}[H] 
		
		\includegraphics[width=8.5cm]{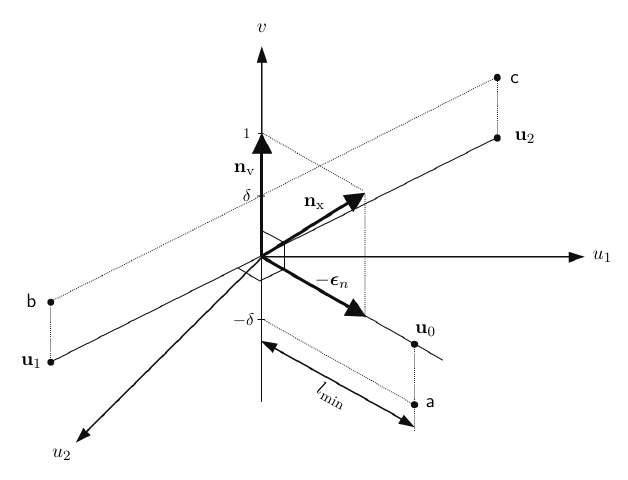} 
		\caption{Geometrical representation of worst-case scenario for measurement noise in the two-dimensional case.\vspace{0.4cm}}
		\label{fig:prop2} 
	\end{figure}
\end{minipage}
\hfill 
\begin{minipage}[t]{0.44\textwidth} 
	\centering
	\begin{figure}[H] 
		
		\centering
		\includegraphics[width=6cm]{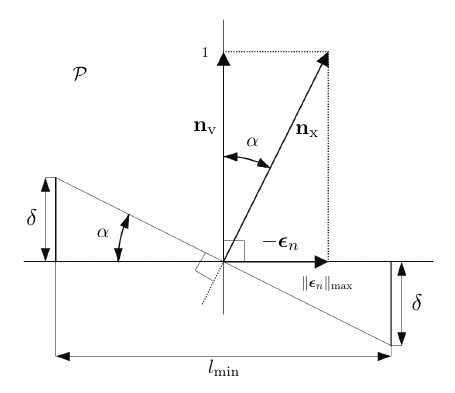} 
		\caption{Geometrical representation of worst-case scenario for measurement noise on the plane $\mathcal{P}$.}
		\label{fig:worstcase} 
	\end{figure}
\end{minipage}

\vspace{0.3cm}
The limiting situation given by Theorem~\ref{th:lmin} is represented in Figure~\ref{fig:prop2} for the two-dimensional case $(\np=2)$. 
There are three distances between complement affine subspaces that can be evaluated, namely,  $l_1$: the distance between $\p_1$ and the line generated by $\p_0$ and $\p_2$; $l_2$: the distance between $\p_2$ and the line generated by $\p_0$ and $\p_1$; and $l_3$: the distance between $\p_0$ and the line generated by $\p_1$ and $\p_2$.
Figure~\ref{fig:prop2} considers the case where $l_{\text{min}} = l_3 = \min\{ l_1,l_2,l_3 \}$.
When the measurement error is $\delta$ for $\p_1$ and $\p_2$, and $-\delta$ for $\p_0$, then $\err_n$ is normal to both complement affine subspaces. According to Theorem~\ref{th:lmin}, this situation leads to the norm of $\err_n$ taking its largest possible value $\| \err_n \|_\text{max}$.
Recall that the norm of $\en_{\text{v}}$ is 1 by definition and vector $\en_{\text{x}}$ is normal to the plane generated by the points $\matr{a}$, $\matr{b}$ and $\matr{c}$ (see \eqref{equ:normalx}). 

Independently of the number of inputs, a plane $\mathcal{P}$ can be defined, which contains the three vectors $\en_{\text{v}}$, $\en_{\text{x}}$ and $\err^n$. The representation of the worst-case scenario of Theorem~\ref{th:lmin} on the plane $\mathcal{P}$ is represented in Figure~\ref{fig:worstcase}. Notice that $2\delta = l_{\text{min}} \text{\rm tan}(\alpha)$, and $\| \err_n \|_\text{max} =  \text{\rm tan}(\alpha)$. Hence, it is verified that $\| \err_n \|_{\text{max}} = \frac{2\delta}{l_{\text{min}}}$.

\begin{Remark}[Least upper bound] \label{rmk:leastupperbound}
	Note that $N_l = \|\err_n\|_{\rm max}$ implies that the l-min bound $N_l$ is the least upper bound on the gradient error due to bounded measurement noise. This means that the conditioning bound $N_c$ can only be greater than or equal to $N_l$ for any poised sample set $\mathcal{U}$.
\end{Remark}

\begin{Remark}[L-min bound for FFD] \label{rmk:FFDlmin}
	In the FFD arrangement with step size $h>0$, the shortest distance between complement affine subspaces corresponds to the distance between $\p_0$ and the hyperplane generated by the points $\p_j = \p_0 + h\vect{e}_j$, $j=1,\dots,n_u$. This distance is $l_{min} = h/\sqrt{n_u}$. Hence, the gradient-error bound in Theorem~\ref{th:lmin} becomes
	\begin{align}
		& \|\err_n\| \leq  \frac{2\delta\sqrt{n_u}}{h}. \label{eq:lminboundFFD}
	\end{align}
\end{Remark}

Remarks \ref{rmk:FFDconditioning} and \ref{rmk:FFDlmin} show that the conditioning bound and the l-min bound are both the same in the case of the FFD arrangement. However, the conditioning bound becomes more conservative than the l-min bound for other geometrical arrangments of the sample points, as will be illustrated in Section~\ref{sec:duality}.

\subsection{Total Gradient Error}
\label{ssec:totalerror}

The upper bounds on the gradient-error norms due to truncation noise and measurement noise can readily be used to bound the total gradient error. 
For instance, in Theorem~\ref{th:totalsquare}, an upper bound $E_c$ on the norm of the gradient error at $\p_0$, is obtained by combining the square column bound and the l-min bound.

\begin{Theorem} \label{th:totalsquare}
	Let Assumptions~\ref{assum:boundednoise}, \ref{assum:differ}, \ref{assum:Lipschitz}, and \ref{assum:Poisedness} hold. Then,
	\begin{align}
		& \|\vect{g} - \nabla f(\p_j)\| \leq E_c := T_c + \frac{2\delta}{l_{min}}.
	\end{align}
\end{Theorem}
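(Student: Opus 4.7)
The plan is to combine the two previously established bounds, the square column bound $T_c$ (Theorem~\ref{th:colnorm}) for the truncation component and the l-min bound $N_l = 2\delta/l_{\text{min}}$ (Theorem~\ref{th:lmin}) for the measurement-noise component, via the triangle inequality already recorded in \eqref{eq:normineq}.

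First I would note that, under the stated assumptions, the gradient error at the reference vertex $\p_0$ decomposes as $\err(\p_0) = \err_t(\p_0) + \err_n$, where $\err_t$ and $\err_n$ are given by \eqref{equ:sop_d} and \eqref{equ:sop_s} respectively. Taking norms and applying the triangle inequality yields
\begin{align}
\|\vect{g} - \nabla f(\p_0)\| \;\leq\; \|\err_t(\p_0)\| + \|\err_n\|. \nonumber
\end{align}
Then I would invoke Theorem~\ref{th:colnorm} — whose hypotheses (Assumptions~\ref{assum:differ}, \ref{assum:Lipschitz}, \ref{assum:Poisedness}) are all in force — to bound the first term by $T_c$, and Theorem~\ref{th:lmin} — whose hypotheses (Assumptions~\ref{assum:boundednoise}, \ref{assum:Poisedness}) are also in force — to bound the second term by $2\delta/l_{\text{min}}$. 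Adding these yields $E_c$.

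The only subtlety is that the theorem statement writes $\p_j$ rather than $\p_0$. Since the linear interpolating polynomial \eqref{eq:m_general} may be written using any vertex as reference, one can repeat the above argument after relabelling: for any fixed $j\in\{0,1,\ldots,n_u\}$ one re-expresses $\vect{U}$ using $\p_j$ as the base point, and both the square column bound and the l-min bound apply verbatim because $T_c$ depends only on pairwise differences of sample points (equivalently, the columns of the re-based matrix) and $l_{\text{min}}$ depends only on the geometry of $\mathcal{U}$, both of which are invariant under the choice of reference vertex.

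I do not anticipate any real obstacle: the result is a clean assembly of Theorems~\ref{th:colnorm} and \ref{th:lmin} via \eqref{eq:normineq}. The only point requiring a sentence of care is the invariance under the choice of reference vertex, so that the bound genuinely holds with $\p_j$ in place of $\p_0$; this is immediate from the base-point invariance of the simplex gradient noted after \eqref{eq:m_general}.
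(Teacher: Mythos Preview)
Your approach is exactly the paper's: its entire proof is the one-line remark that the result follows directly by combining \eqref{eq:normineq}, \eqref{eq:colnormbound}, and \eqref{eq:lminbound}. One correction to your side remark on the $\p_j$ versus $\p_0$ issue: $T_c$ is \emph{not} invariant under the choice of reference vertex (see Table~\ref{table:Ex1}, where $T_c$ takes distinct values at $\p_0,\p_1,\p_2$); after relabelling you obtain the square column bound computed with the new base point, so $E_c$ must be read as that re-based quantity rather than a single vertex-independent number. The paper itself does not address this, and its proof is tacitly at $\p_0$.
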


\begin{proof}
	The proof follows directly by combining \eqref{eq:normineq}, \eqref{eq:colnormbound}, and \eqref{eq:lminbound}.
\end{proof}

\vspace{0.5cm}

In the case of FFD, an upper bound on the gradient-error norm is given by
\begin{align}
	& E_{FFD} := \frac{L\sqrt{n_u}\ h}{2} + \frac{2\delta\sqrt{n_u}}{h}, \label{eq:EFFD}
\end{align}
which is a well-known bound for the FFD approximation to the gradient $\nabla f(\p_0)$ \cite{Berahas:2022}. Note that this bound is not improved by the use of the square column bound and the l-min bound.
It is easy to show that the step size $h$ that minimizes $E_{FFD}$ is $h^\star = 2\sqrt{\delta/L}$.

\section{Approximate Gradient-Error Bounds} \label{sec:approx}

The delta bound and the square column bound are proven upper bounds on the simplex gradient error due to truncation. However, these guarantees often come at the cost of excessively conservative estimates.

Excessively conservative gradient error bounds in DFO can slow convergence due to overly cautious steps. They may lead to the rejection of productive search directions, excessive sampling, inefficient use of evaluations, and even premature termination. As a result, the optimization process becomes unnecessarily costly and less effective.

To address this, we introduce the notion of an {\it approximate gradient error bound}—a measure that aims to upper bound the gradient error norm without the strict conservatism of classical bounds. Unlike exact bounds, an approximate gradient error bound is not guaranteed to hold in all cases, but is designed to be valid in most practical scenarios. The goal is to provide a tighter, more realistic assessment of gradient accuracy while still retaining the structure and interpretability of a bound. Crucially, it offers a practically useful tool for controlling gradient accuracy in DFO, without incurring the drawbacks associated with excessively conservative error bounds.

\subsection{Radial Bound as an Approximate Gradient Error Bound}

We now propose the radial bound, defined in \eqref{eq:radialbound}, as an approximate gradient error bound for use in DFO. This bound is particularly appealing due to the following properties:
\begin{itemize}
	\item The radial bound upper bounds the projection of the gradient error along all directions $\p_j-\p_0$, given by the columns of $\vect{U}$. When these directions are orthogonal, the radial bound provides a strict upper bound on the gradient error norm.
	\item When the columns of $\vect{U}$ are not orthogonal, the radial bound still frequently upper bounds the gradient error norm, even though this is not formally guaranteed.
	\item The radial bound generalizes easily to higher dimensions, as it depends only on the gradient Lipschitz constant and the radius defined by the interpolation points. 
	\item The radial bound produces a uniform value when evaluated at any of the $(n_u + 1)$ points in the interpolation set. This aligns with the fact that the simplex gradient yields the same estimate at any vertex of the simplex. Given that the gradient Lipschitz constant is the only information available, there is no reason to expect the gradient error to be significantly larger or smaller at one vertex compared to another. Yet, the delta and square column bounds may assign significantly different upper bounds to different vertices of the simplex.
\end{itemize}

In addition to being conservative, the delta bound \eqref{eq:deltabound} involves a subtle tradeoff between the conditioning of the interpolation points—measured by $\|\hat{\vect{U}}^{-1}\|$ and their mutual proximity, characterized by the value of $\Delta$. This tradeoff can be difficult to perceive or interpret in practice. In contrast, the radial bound encodes this relationship more transparently through the radius of the sphere defined by the interpolation points, as illustrated in Example~\ref{ex:circle}.

\begin{Example} \label{ex:circle}
	Consider a function $f(\p)$, $\p\in\mathbb{R}^2$, with gradient Lipschitz constant $L$. The computation of the simplex gradient requires an interpolation set of three points. Taking the green point as the reference, both the left and right plots in Figure~\ref{fig:circulo} depict sets of points with the same value of $\Delta$. However, the left set is poorly conditioned and spans a large circle, while the right set is well conditioned and spans a much smaller one. As a result, the radial bound $T_r = L r$ is significantly smaller for the right configuration.
\end{Example}
\begin{figure}
	
	\centering \includegraphics[width=7cm]{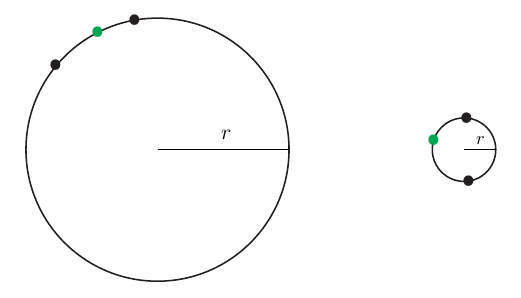} 
	
	\caption{Radial bound: Interplay between the conditioning of the interpolation points and the distance between them. The points on both circles have the same value of $\Delta$ with respect to the green point, but different conditioning.} 
	\label{fig:circulo}
\end{figure}

From the delta bound, one might infer that improving gradient accuracy simply requires reducing $\Delta$, i.e., bringing the points in the interpolation set closer together. This inference is valid only when the geometry of the sample set is preserved, such as by scaling $\vect{U}$ uniformly. However, when the geometry is allowed to vary, this reasoning can be misleading: the gradient error may grow arbitrarily large for any fixed value of $\Delta$, depending on the conditioning of the interpolation matrix. In contrast, the radial bound—by depending solely on the radius of the sphere defined by the sample points—captures both proximity and geometry in a unified and easily interpretable way.\\

The following example illustrates the suitability of the radial bound as an approximate gradient upper bound for a function in $\mathbb{R}^5$. 

\begin{Example} \label{ex:Ex5}
	Consider the non-convex function
	\begin{align}
		& f(\p) = u_1^2 - u_2^2 + (u_3-u_4)^2 - u_5^2 + u_1 u_3 - u_2 u_4 - 6 u_5 + 5 u_2, \nonumber
	\end{align}
where $\p = \tran{[u_1, \dots, u_5]}$. This function has a constant Hessian, and its gradient Lipschitz constant is $L = 4.3014$, computed as the spectral radius of the Hessian.

To assess the behavior of the gradient error and the gradient error bounds under diverse geometric configurations, we generate 1000 random interpolation sample sets $\mathcal{U}$, each consisting of six points $(\p_0,\p_1,\dots,\p_5)$ selected uniformly within the box $\mathcal{B} = \{\p\in\mathbb{R}^5\ :\ -1\leq u_i \leq 1,\ \text{for}\ i=1,\dots,5\}$. Sample sets with $\|\vect{U}^{-1}\| > 8$ are discarded to avoid extreme amplification of the gradient error due to poor conditioning.

For each sample set, we evaluate the true norm of the truncation gradient error, $\|\vect{\epsilon}_t(\p_0)\|$, along with the square column bound and the radial bound. The results are shown in Figure~\ref{fig:Ejemplo_5inputs}. The square column bound (purple) consistently lies well above both the actual gradient error norm (black) and the radial bound (red), reflecting its conservative nature. In contrast, the radial bound successfully upper bounds the gradient error norm in 87.8\% of the cases, and provides a much tighter and more informative approximation in general.
\end{Example}

\begin{figure}	
	\centering \includegraphics[width=7.5cm]{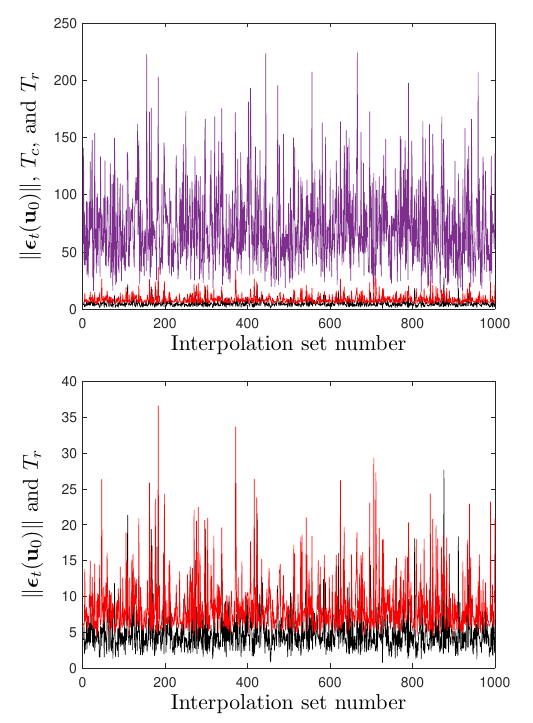} 
	
	\caption{Gradient error norm and gradient error bounds for 1000 random interpolation sets in Example~\ref{ex:Ex5}. \underline{Black}: $\|\vect{\epsilon}_t(\p_0)\|$; \underline{Purple}: $T_c$; \underline{Red}: $T_r$.} 
	\label{fig:Ejemplo_5inputs}
\end{figure}

\subsection{Extension of the Radial Bound to Non-Vertex Points.}

We aim to obtain approximate gradient error bounds at non-vertex points lying within the simplex (or convex hull) defined by the interpolation points. 

Following the same reasoning as in \eqref{eq:deltabound_u2}, a pointwise approximate bound $T_f$ on the gradient-error norm can be obtained from the radial bound as
\begin{align}
	& T_f(\p) := T_r + L\|\p-\p_0\|. \label{eq:radialbound_u2}
\end{align}

However, when this bound is evaluated at a vertex $\p_j\in\mathcal{U}$, $j\neq 0$, we obtain $T_f(\p_j) := T_r + L\|\p_j-\p_0\| > T_r$, which contradicts the uniformity of the radial bound across all interpolation points, as established in Remark~\ref{rmk:radialuniform}.   
Moreover, empirical observations indicate that the gradient error tends  to decrease—rather than increase—at points located within the interior of the convex hull defined by the interpolation set. 

Our goal is therefore to construct an approximate bounding function for the gradient error norm that:
\begin{enumerate}
	\item Returns the value $T_r$ at all interpolation points $\p_j\in\mathcal{U}$, and 
	\item More accurately reflects the behavior of the gradient error within the convex hull of the interpolation set.
\end{enumerate} 

To explore this, we begin by examining the one-dimensional case $(n_u = 1)$. Let us define the functions
\begin{align}
	& q(\pk) = \frac{L}{2}(\pk - \pk_0)^2, \nonumber
\end{align}
and
\begin{align}
	& h(\pk) = q(\pk) + f(\pk_0) + \lambda(\pk - \pk_0), \nonumber
\end{align}
with
\begin{align}
	& \lambda = \frac{f(\pk_1)-f(\pk_0)}{\pk_1-\pk_0} - \frac{q(\pk_1)-q(\pk_0)}{\pk_1-\pk_0}. \nonumber
\end{align}

It is easy to show that $h(\pk_0)=f(\pk_0)$ and $h(\pk_1)=f(\pk_1)$. Hence, $h(\pk)$ is the quadratic function with second derivative $\nabla^2h=L$ that interpolates $f$ at $\pk_0$ and $\pk_1$.
The gradient error of the function $h$ is defined as
\begin{align}
	& \epsilon_t^h(\pk) = \frac{f(\pk_1)-f(\pk_0)}{\pk_1 - \pk_0} - \nabla h(\pk). \nonumber
\end{align}

Noting that $\nabla h(\pk) = L(\pk- \pk_0) + \lambda$, we get
\begin{align}
	& \epsilon_t^h(\pk) = \frac{q(\pk_1)-q(\pk_0)}{\pk_1 - \pk_0} - L(\pk - \pk_0) = \frac{L}{2}(\pk_1 - \pk_0) - L(\pk - \pk_0) = L(\pk_c - \pk), \nonumber
\end{align}
with the center point $\pk_c = (\pk_0+\pk_1)/2$. Let us define the {\it extended radial bound} as
\begin{align}
	& T_h(\pk) := |\epsilon_t^h(\pk)| = L|\pk_c - \pk|. \nonumber
\end{align}

Note that $T_h(\pk_0) = T_h(\pk_1) = L r = T_r$, and $T_h(\pk_c) = 0$.
The gradient error of the function $f$ is defined as
\begin{align}
	& \epsilon_t^f(\pk) = \frac{f(\pk_1)-f(\pk_0)}{\pk_1 - \pk_0} - \nabla f(\pk). \nonumber
\end{align}

We know from the mean-value theorem that there exists $\bar{\pk}\in(\pk_0,\pk_1)$ such that $\epsilon_t^f(\bar{\pk}) = 0$.\\

The following example illustrates the behavior of the gradient error $|\epsilon_t^f(\pk)|$, the approximate gradient-error bound $T_f(\pk) = T_r + L|\pk-\pk_0|$, and the extended radial bound $T_h(\pk)$, for the case of the exponential function.

\begin{Example} \label{exm:Th}
	Let us consider the function $f(\pk) = e^u$ in the interval $\mathcal{Q} = [0.5,2.5]$. The gradient Lipschitz constant $L$ is an upper bound on the second derivative of $f$ on $\mathcal{Q}$, that is, $L=e^{2.5}$. The gradient (derivative) of $f$ is approximated by finite differences from the points $(\pk_0,f(\pk_0))$ and $(\pk_1,f(\pk_1))$, with $\pk_0=1$ and $\pk_1=2$. The quadratic function $h(\pk)$ interpolates $f$ at $\pk_0$ and $\pk_1$, and has a conservative curvature with respect to $f$, as shown in the top plot of Figure~\ref{fig:fun1}. The gradient-error $|\epsilon_t^f(\pk)|$ is compared with $T_f(\pk)$ in the middle plot of Figure~\ref{fig:fun1}, while the comparison with the extended radial bound $T_h(\pk)$ is provided in the bottom plot of Figure~\ref{fig:fun1}. This last plot shows that $T_h(\pk)$ is not an upper bound on $|\epsilon_t^f(\pk)|$ for all $\pk\in[\pk_0,\pk_1]$. Indeed, $T_h(\pk_c) = 0$, while $|\epsilon_t^f(\pk_c)|>0$. However, $T_h(\pk)$ is equal to the radial bound $T_r$ at the interpolation points, and gives a better approximation of the gradient error in the interval $[\pk_0,\pk_1]$.
\end{Example}

\begin{figure}[h]
	
	\centering \includegraphics[width=5.5cm]{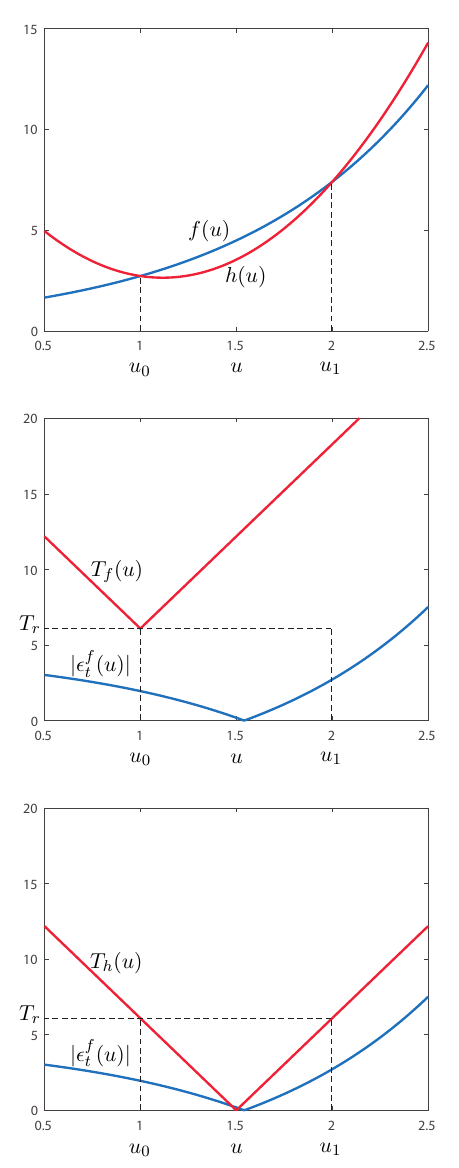} 
	
	\caption{Illustration of the gradient error $|\epsilon_t^f(\pk)|$, the approximate bound $T_f(u)$, and the extended radial bound $T_h(u)$ in Example~\ref{exm:Th}.} 
	\label{fig:fun1}
\end{figure}

The definition of the bound $T_h(\pk)$ can be extended to the $n_u$-dimensional case. Consider the function
\begin{align}
	& q(\p) = \frac{1}{2}\tran{(\p-\p_0)}\vect{D}(\p-\p_0), \nonumber
\end{align}
with $\vect{D} = L\vect{I}_{n_u}$, and
\begin{align}
	& h(\p) = q(\p) + f(\p_0) + \tran{\vect{\lambda}}(\p-\p_0), \nonumber
\end{align}
with
\begin{align}
	& \tran{\vect{\lambda}} = \tran{(\vect{y}_f - \vect{y}_q)}\vect{U}^{-1}, \nonumber\\
	& \tran{\vect{y}_f} = \left[ f(\p_1)-f(\p_0),\ \dots\ ,\ f(\p_{n_u})-f(\p_0)\right], \nonumber\\
	& \tran{\vect{y}_q} = \left[ q(\p_1)-q(\p_0),\ \dots\ ,\ q(\p_{n_u})-q(\p_0)\right]. \nonumber
\end{align}

Noting that $f(\p_0) + \tran{\vect{\lambda}}(\p-\p_0)$ is the linear interpolation model for $f(\p)-q(\p)$, it follows that $f(\p_0) + \tran{\vect{\lambda}}(\p_j-\p_0) = f(\p_j)-q(\p_j)$, for all $\p_j\in\mathcal{U}$. Hence, $h(\p_j) = f(\p_j)$, for $j=0,1,\dots,n_u$, and $h(\p)$ is the quadratic function with Hessian matrix $\vect{D} = L\vect{I}_{n_u}$ that interpolates $f(\p)$ at all the points in the interpolation set $\mathcal{U}$.

The gradient error of function $h$ is
\begin{align}
	\tran{\err_t^h(\p)} &= \tran{\vect{g}_t} - \tran{\nabla h(\p)} 
	= \tran{\vect{y}_f}\vect{U}^{-1} - \tran{(\p-\p_0)}\vect{D} - \tran{\vect{\lambda}}
	= \tran{\vect{y}_q}\vect{U}^{-1} - L\tran{(\p-\p_0)},  \nonumber
\end{align}
or
\begin{align}
	\tran{\err_t^h(\p)} &= \frac{L}{2}\left[\tran{(\p_1-\p_0)}(\p_1-\p_0),\ \dots\ ,\ \tran{(\p_{n_u}-\p_0)}(\p_{n_u}-\p_0) \right]\vect{U}^{-1} 
	- L\tran{\p} + L\tran{\p_0}.  \label{eq:approxA}
\end{align}

In turn, we can write
\begin{align}
	L\tran{\p_0} &= \frac{L}{2}2\tran{\p_0}\vect{U}\vect{U}^{-1}  
	= \frac{L}{2}\left[2\tran{\p_0}(\p_1-\p_0),\ \dots\ ,\ 2\tran{\p_0}(\p_{n_u}-\p_0) \right]\vect{U}^{-1},  \label{eq:approxB}
\end{align}
and
\begin{align}
	& \tran{(\p_j-\p_0)}(\p_j-\p_0) + 2\tran{\p_0}(\p_j-\p_0) = \tran{\p_j}\p_j - \tran{\p_0}\p_0, \qquad \text{for}\ \ j=1,2,\dots,n_u. \label{eq:approxC}
\end{align}

From \eqref{eq:approxA}, \eqref{eq:approxB}, and \eqref{eq:approxC}, we have
\begin{align}
	\tran{\err_t^h(\p)} &= \frac{L}{2}\left[\tran{\p_1}\p_1 - \tran{\p_0}\p_0,\ \dots\ ,\ \tran{\p_{n_u}}\p_{n_u} - \tran{\p_0}\p_0 \right]\vect{U}^{-1} 
	- L\tran{\p}  \nonumber\\
	&= L(\tran{\p_c} - \tran{\p}) \label{eq:approxD}.
\end{align}

We now define the extended radial bound as
\begin{align}
	& T_h(\p) := \|\err_t^h(\p)\| = L\|\p_c - \p\|. \label{eq:approxbound}
\end{align}

Clearly, $T_h(\p_j) = Lr = T_r$, for all $\p_j\in\mathcal{U}$, and $T_h(\p_c) = 0$.

\subsection{The Simplex Bound}
\label{ssec:simplex}

Instead of focusing on the gradient accuracy at $\p_0$, or at the points in the sample set $\mathcal{U}$, we argue that, for the purpose of optimization, it may be sufficient to pay attention to gradient accuracy at any point that belongs to the convex hull determined by the points in $\mathcal{U}$. This convex hull, which we shall denote by $\text{\rm conv}(\mathcal{U})$, corresponds to the polyhedral set that has $\p_0,\p_1,\dots,\p_{n_u}$ as its extreme points.
Based on this premise, we define the {\it simplex bound} $T_s$ as the minimum value that the extended radial bound $T_h(\p)$ takes in $\text{\rm conv}(\mathcal{U})$.
This approximate gradient error bound was proposed in \cite{Marchetti:2013a} in the context of dual real-time optimization.
\begin{align}
	& T_s := T_h(\p_s^\star), \label{eq:Ts}
\end{align}
with
\begin{align}
	& \p_s^\star = [\p_0,\ \p_1,\ \dots,\ \p_{n_u}]\vect{d}^\star, \qquad \text{with}\ \vect{d}^\star\in\mathbb{R}^{n_u+1}, \label{eq:usstar}
\end{align}
where
\begin{align}
	\vect{d}^\star = \text{arg}\min_{\vect{d}} \quad & \tran{(\p_c - \p_s)}(\p_c - \p_s) \label{eq:Probsimplex}\\
	\text{s.t.} \quad & \p_s = [\p_0,\ \p_1,\ \dots,\ \p_{n_u}]\vect{d} \nonumber\\
	& \vect{d} \geq \vect{0} \nonumber\\
	& \sum_{i=1}^{n_u+1} d_i = 1. \nonumber
\end{align}

The constraints in Problem~\eqref{eq:Probsimplex} ensure that $\p_s$ belongs to $\text{\rm conv}(\mathcal{U})$. Hence, $\p_s^\star$ is the point in the polyhedral set that is closest to the center point $\p_c$, and therefore, $\p_s^\star$ minimizes the extended radial bound $T_h(\p)$ subject to $\p\in\text{\rm conv}(\mathcal{U})$. Note that, if $\p_c\in\text{\rm conv}(\mathcal{U})$, then $\p_s^\star = \p_c$, and $T_s=0$. However, if $\p_c\notin\text{\rm conv}(\mathcal{U})$, then $T_s>0$.


\section{Comparison of Gradient-Error Bounds}
\label{sec:duality}

If the reference point $\p_0$ is replaced by a variable input $\p$, all the gradient error bounds introduced in Sections~\ref{sec:errorbounds} and \ref{sec:approx} can be formulated as functions of $\p$. Given the $n_u$ sample points $(\p_1,f(\p_1)), \dots, (\p_{n_u},f(\p_{n_u}))$, the radial bound $T_r(\p)$, for example, provides an approximate upper bound on the norm of the truncation gradient error that would result from evaluating $f$ at $\p$, and then computing the simplex gradient using the points $(\p,f(\p)), (\p_1,f(\p_1)), \dots, (\p_{n_u},f(\p_{n_u}))$. A desired upper bound $E^U$ on the gradient error can then be enforced by imposing a constraint of the form:
\begin{align}
	& \text{\rm a)} \quad T_r(\p) \leq E^U. \label{eq:constr_Tr}\\
	& \text{\rm b)} \quad N_l(\p) \leq E^U. \label{eq:constr_Nl}\\
	& \text{\rm c)} \quad E_r(\p) = T_r(\p) + N_l(\p) \leq E^U. \label{eq:constr_TrNl}
\end{align}

Constraint \eqref{eq:constr_Tr} uses the radial bound to control the gradient error due to truncation, \eqref{eq:constr_Nl} applies the l-min bound to limit the gradient error due to measurement noise, and \eqref{eq:constr_TrNl} combines both to control the total gradient error.

In general, the gradient error constraints introduced above define two disjoint feasible regions, located on opposite sides of the hyperplane defined by the points $\p_1,\dots,\p_{n_u}$. To characterize this hyperplane, we introduce the matrix
\begin{align}
	& \vect{U}_s = \left[\p_2-\p_1,\ \p_3-\p_1,\dots,\p_{n_u}-\p_1\right] \in \mathbb{R}^{n_u\times(n_u-1)}. \nonumber
\end{align}
Assuming the columns of $\vect{U}_s$ are linearly independent, they span a unique hyperplane containing the $n_u$ points $\p_1,\dots,\p_{n_u}$. Let $\vect{n}\in\mathbb{R}^{n_u}$ be a unit normal vector to this hyperplane. Then, $\tran{(\vect{U}_s)}\vect{n}=\vect{0}$, and the hyperplane can be written as $\mathcal{H} = \{\p\in\mathbb{R}^{n_u}:\ \tran{\vect{n}}\p=b,\ \text{\rm where}\ b=\tran{\vect{n}}\p_1\}$.
It is important to note that if $\p\in\mathcal{H}$, then the set of $n_u+1$ points $\{\p,\p_1,\dots,\p_{n_u}\}$ will lie on the same hyperplane and therefore will not be poised for linear interpolation in $\mathbb{R}^{n_u}$.\\

In this section, we compare some of the gradient-error bounds introduced in Sections~\ref{sec:errorbounds} and \ref{sec:approx} by analyzing the feasible regions generated by the corresponding gradient error constraints in the two-dimensional case.

\begin{Example}[Gradient error due to truncation] \label{ex:truncationerror}
	Consider a function $f(\p)$, with $\p\in\mathbb{R}^2$, and assume a gradient Lipschitz constant $L=2$. Let the sample points be $\p_1 = \tran{[0,-0.5]}$ and $\p_2 = \tran{[0,0.5]}$, and set the gradient-error bound to $E^U = 2$. 
	
	Using the delta bound, the constraint $T_d(\p) \leq 2$ defines the two shaded feasible regions shown in the top plot of Figure~\ref{fig:TruncBounds}, located on either side of the hyperplane $\mathcal{H}$. The feasible regions corresponding to the radial bound, defined by the constraint $T_r(\p)\leq 2$, are shown in the middle plot of Figure~\ref{fig:TruncBounds}. As illustrated, the delta bound produces much smaller regions, highlighting its greater conservatism compared to the radial bound.
	
	Finally, the bottom plot of Figure~\ref{fig:TruncBounds} shows the feasible regions defined by the constraint $T_s(\p)\leq 2$, corresponding to the simplex bound. As expected, these regions are significantly larger than those defined by the radial bound.
	
	For all three bounds, the contour lines corresponding to a relaxed error limit $E^U=3$ are also included in Figure~\ref{fig:TruncBounds} to facilitate visual comparison of their scaling behavior.
\end{Example}

\begin{figure}
	
	\centering \includegraphics[width=6.5cm]{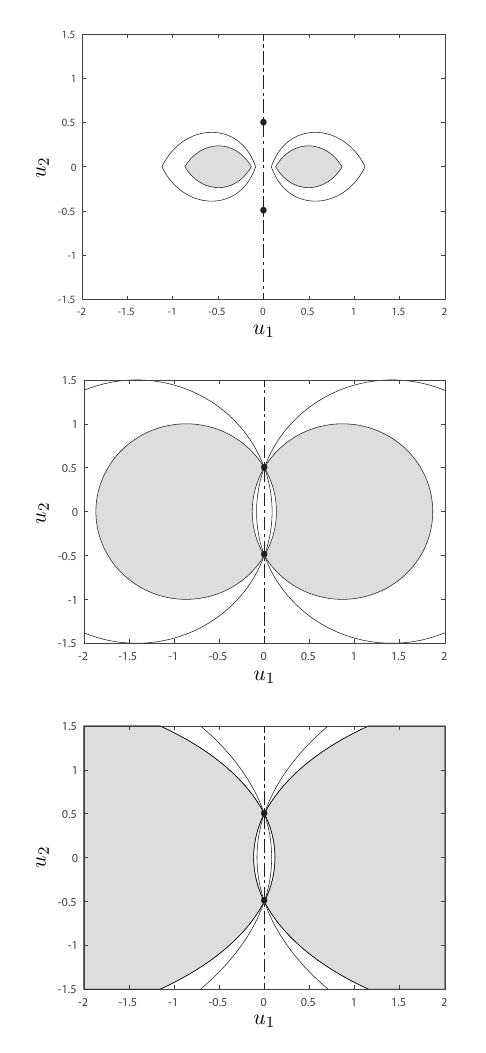} 
	
	\caption{Gradient error due to truncation with $L=2$, using the points $\p_1 = \tran{[0,-0.5]}$, $\p_2 = \tran{[0,0.5]}$. \underline{Top plot}: delta bound: feasible regions given by $T_d(\p)\leq 2$. \underline{Middle plot}: radial bound: feasible regions given by $T_r(\p)\leq 2$. \underline{Bottom plot}: simplex bound: feasible regions given by $T_s(\p)\leq 2$. The contour lines for $E^U=3$ are also shown in the three plots.} 
	\label{fig:TruncBounds}
\end{figure} 

\begin{Example}[Gradient error due to measurement noise] \label{ex:noiseerror}
	Let $\tilde{f}(\p) = f(\p) + \ru(\p)$ be a noisy function, where the noise is bounded as $|\ru(\p)|\leq \delta$. Let $\delta=0.2$, and consider a gradient-error bound of $E^U=2$. 
	
	The left-hand plots in Figure~\ref{fig:Noisebounds} correspond to the sample points $\p_1 = \tran{[0,-0.5]}$ and $\p_2 = \tran{[0,0.5]}$, while the right-hand plots use points that are closer together: $\p_1 = \tran{[0,-0.22]}$ and $\p_2 = \tran{[0,0.22]}$. 
	
	The top plots in Figure~\ref{fig:Noisebounds} show the feasible regions resulting from the conditioning-bound constraint $N_c(\p)\leq 2$, while the bottom plots correspond to the l-min bound constraint $N_l(\p)\leq 2$.
	
	As seen in the figure, the conditioning bound is more conservative than the l-min bound, and its conservatism increases markedly as the sample points become closer together. Contour lines corresponding to the relaxed threshold $E^U = 3$ are also included in all plots to facilitate comparison.
\end{Example}

\begin{figure} 
	
	\centering 
	
	\includegraphics[width=12cm]{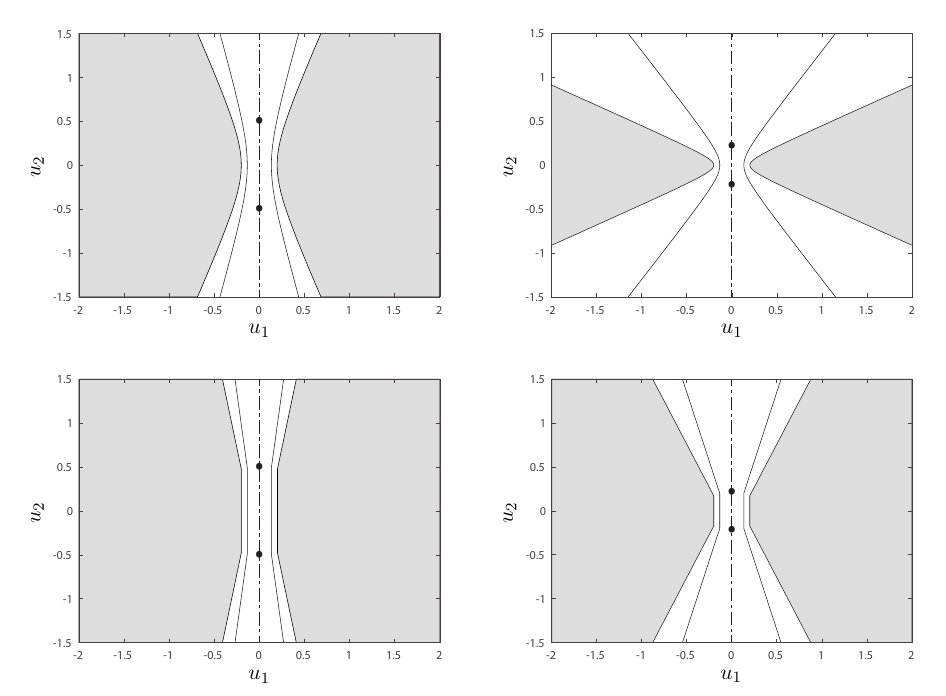} 
	
	\caption{Gradient error due to measurement noise with $\delta=0.2$. \underline{Left plots} use $\p_1 = \tran{[0,-0.5]}$ and $\p_2 = \tran{[0,0.5]}$. \underline{Right plots} use $\p_1 = \tran{[0,-0.22]}$ and $\p_2 = \tran{[0,0.22]}$. \underline{Top plots}: conditioning bound: feasible regions given by $N_c(\p)\leq 2$. \underline{Bottom plots}: l-min bound: feasible regions given by $N_l(\p)\leq 2$. The contour lines for $E^U=3$ are also shown in the plots.} 
	\label{fig:Noisebounds}
\end{figure} 

\begin{Example}[Total gradient error]
	Let $\tilde{f}(\p)$ be a noisy function with gradient Lipschitz constant $L=2$ and noise bounded by $\delta=0.2$. 
	
	The left-hand plots in Figure~\ref{fig:Totbounds} correspond to the sample points $\p_1 = \tran{[0,-0.5]}$ and $\p_2 = \tran{[0,0.5]}$, while the right-hand plots use more closely spaced points: $\p_1 = \tran{[0,-0.22]}$ and $\p_2 = \tran{[0,0.22]}$. 
	
	The top plots show the feasible regions defined by the constraint $E_r(\p) = T_r(\p)+N_l(\p)\leq 2$, which combines the radial bound with the l-min bound. The bottom plots show the regions resulting from $E_s(\p) = T_s(\p)+N_l(\p)\leq 2$, which combines the simplex bound with the l-min bound. As expected, the feasible regions associated with the simplex bound are considerably larger, reflecting its less conservative nature.
	
	Contour lines corresponding to a relaxed error bound $E^U = 3$ are also shown in all plots to facilitate comparison.
\end{Example}

\begin{figure}[tb]
	
	\centering 
	
	\includegraphics[width=12cm]{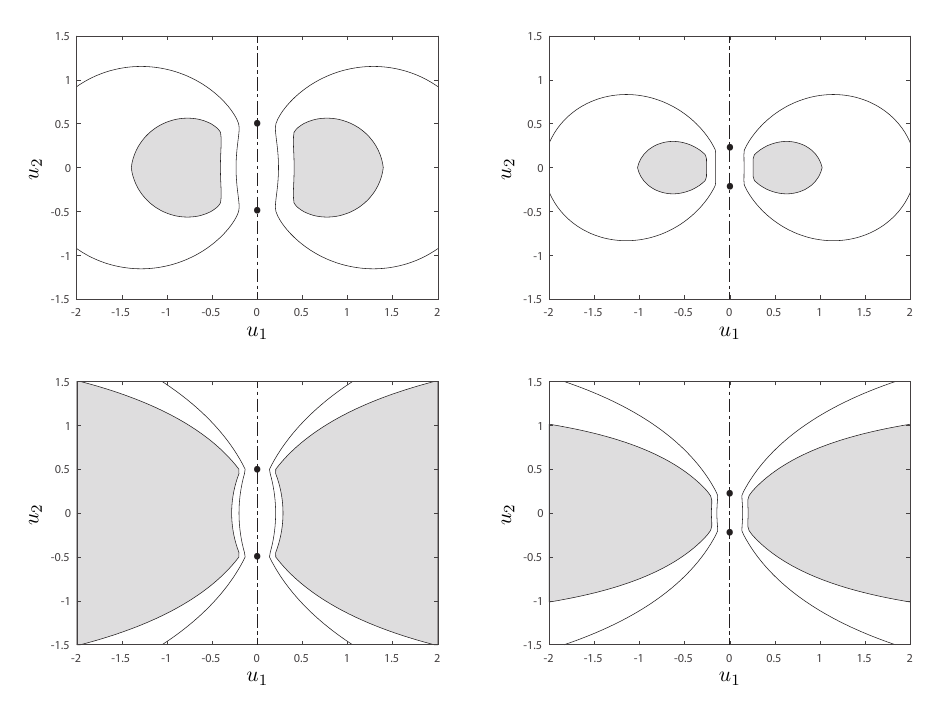} 
	
	\caption{Total gradient error with $L=2$ and $\delta=0.2$. \underline{Left plots} use $\p_1 = \tran{[0,-0.5]}$ and $\p_2 = \tran{[0,0.5]}$. \underline{Right plots} use $\p_1 = \tran{[0,-0.22]}$ and $\p_2 = \tran{[0,0.22]}$. \underline{Top plots}: feasible regions given by $E_r(\p)\leq 2$. \underline{Bottom plots}: feasible regions given by $E_s(\p)\leq 2$. The contour lines for $E^U=3$ are also shown in the plots.}  
	\label{fig:Totbounds}
\end{figure} 

The different gradient error bounds that have been presented and compared in this work could be useful in a variety of DFO schemes. In the next section, we present a sequential programming DFO scheme that directly includes the approximate gradient error bounds as constraints in the optimization problem.

\section{Derivative-Free Optimization of  Noisy Functions Using Duality Constraints}
\label{sec:DFOduality}

In this section, we propose a derivative-free optimization algorithm for noisy unconstrained optimization. We consider a sequential programming scheme that computes a step by minimizing the model. At each iteration, the model is defined by interpolating previously evaluated (noisy) function values. The key issue is to obtain reliable gradient estimates while progressing with the optimization. Emulating dual RTO algorithms \cite{Marchetti:2010,Marchetti:2013a}, gradient accuracy is enforced by including a constraint on the approximate gradient-error norm (duality constraint) in the optimization problem. Our proposed algorithm is rudimentary, lacking a claim to global convergence results.  Nevertheless, the main purpose here is to demonstrate the usefulness of duality constraints in DFO of noisy functions.\\

At each iteration $k$, our method uses the interpolation set $\mathcal{U}_k = \{\p_k,\p_{k-1},\dots,\p_{k-n_u}\}$. Based on this interpolation set, the model proposed at the $k^{th}$ iteration is
\begin{align}
	& m_k(\p) = q_k(\p) + \tilde{f}(\p_k) + \tran{\vect{\lambda}_k}(\p - \p_k), \label{eq:mk}
\end{align}
with
\begin{align}
	q_k(\p) =&\ \frac{1}{2}\tran{(\p-\p_k)}\vect{H}_k(\p-\p_k), \label{eq:qk} \\
	\tran{\vect{\lambda}_k} =&\ \tran{(\vect{y}_k - \vect{y}_{q,k})}(\vect{U}_k)^{-1}, \label{eq:lambdak} \\
	\vect{U}_k =& \left[\p_{k-1}-\p_k,\ \dots,\ \p_{k-n_u}-\p_k\right], \label{eq:Uk}\\
	\tran{\vect{y}_k} =& \left[\tilde{f}(\p_{k-1})-\tilde{f}(\p_{k}),\ \dots,\ \tilde{f}(\p_{k-n_u})-\tilde{f}(\p_k)\right], \label{eq:yk} \\
	\tran{\vect{y}_{q,k}} =& \left[q_k(\p_{k-1}),\ \dots,\ q_k(\p_{k-n_u})\right], \label{eq:yqk} 
\end{align}
where we used $q_k(\p_k) = 0$ in \eqref{eq:yqk}.
By construction, $m_k(\p)$ is the quadratic function with the Hessian matrix $\vect{H}_k$ that matches the noisy function evaluations $\tilde{f}(\p_j)$ at the points in $\mathcal{U}_k$.
Note that the simplex gradient at $\p_k$ is computed as
\begin{align}
	& \tran{\vect{g}_k} = \tran{\vect{y}_k}(\vect{U}_k)^{-1}. \nonumber
\end{align}

$\vect{H}_k$ is a positive definite approximation to the Hessian of $f$, which may be estimated from the previous function evaluations. A natural choice would be to implement the complementary BFGS update based on the gradient estimates $\vect{g}_k$ and $\vect{g}_{k-1}$ (see \cite{Bazaraa:06}). However, this may lead to erratic behavior and lack of convergence without further precautions.
In derivative-based sequential convex programming methods \cite{Beck:2010,Marks:78,Svanberg:2002,Marchetti:2017}, monotonic convergence to a local minimum of a noise-free function $f(\p)$ is established if, at each iteration $k$, the convex model $m_k(\p)$ satisfies: $i$) $\nabla m_k(\p_k) = \nabla f(\p_k)$, and $ii$) $m_k(\p)\geq f(\p)$ for all $\p\in\mathbb{R}^{n_u}$. This last condition can be achieved if, in any direction $\vect{d}\in\mathbb{R}^{n_u}$, the second derivative of $m_k$ upper bounds the second derivative of $f$. This convergence result does not extend to noisy functions with bounded gradient error. Nevertheless, a conservative Hessian will help enforce convergence as it leads to more conservative steps. We therefore select $\vect{H}_k = \vect{D} = L\vect{I}_{n_u}$ in our demonstration of the use of duality constraints in DFO.\\

The dividing hyperplane $\mathcal{H}_k$ and the duality constraint $E_k(\p)\leq E_k^U$, at the $k^{th}$ iteration, are computed from the $n_u$ previous points $\{\p_k,\p_{k-1},\dots,\p_{k-n_u+1}\}$. Let us introduce the matrix
\begin{align}
	& \vect{U}_{k,s} = \left[\p_{k-1}-\p_k,\ \p_{k-2}-\p_k,\dots,\p_{k-n_u+1}-\p_k\right] \in \mathbb{R}^{n_u\times(n_u-1)}. \nonumber
\end{align}

The normal vector $\vect{n}_k$ satisfies $\tran{(\vect{U}_{s,k})}\vect{n}_k=\vect{0}$, and the hyperplane is $\mathcal{H}_k = \{\p\in\mathbb{R}^{n_u}:\ \tran{\vect{n}_k}\p=b_k,\ \text{\rm with}\ b_k=\tran{\vect{n}_k}\p_k\}$. 

At the $k^{th}$ iteration, the approximate upper bound on the gradient error norm, denoted as $E_k(\p)$, is determined by the set of points $\{\p,\p_k,\p_{k-1},\dots,\p_{k-n_u+1}\}$. The duality constraint imposes the condition $E_k(\p)\leq E_k^U$.
This constraint controls gradient accuracy, depending on the chosen bounds for both the truncation gradient error and the error introduced by measurement noise. \\

The next operating point is computed by minimizing the model $m_k(\p)$ subject to the duality constraint, at each side of the hyperplane $\mathcal{H}_k$.
The optimization problem for the half space $\tran{\vect{n}_k}\p \geq b_k$ is
\begin{align}
	\p_{k+1}^{+} = \text{\rm arg}\min_{\p} \ \ &\ m_k(\p) \label{eq:optim1}\\
	\text{\rm s.t.} \quad &\ E_k(\p)\leq E_k^U,\quad \tran{\vect{n}_k}\p \geq b_k, \nonumber
\end{align}
while, for $\tran{\vect{n}_k}\p \leq b_k$, the problem reads
\begin{align}
	\p_{k+1}^{-} = \text{\rm arg}\min_{\p} \ \ &\ m_k(\p) \label{eq:optim2}\\
	\text{\rm s.t.} \quad &\ E_k(\p)\leq E_k^U,\quad \tran{\vect{n}_k}\p \leq b_k. \nonumber
\end{align}

The next point $\p_{k+1}$ is selected as the point that minimizes $m_k$,
\begin{align}
	\p_{k+1} = \text{\rm arg}\min_{\p_{k+1}^{+},\p_{k+1}^{-}}\ \{m_k(\p_{k+1}^{+}),m_k(\p_{k+1}^{-})\} \label{eq:minimizer}
\end{align}

The proposed DFO algorithm for noisy unconstrained optimization is formalized in Algorithm~\ref{alg:algor1}. 
\begin{algorithm}[h]
	\caption{Sequential Programming DFO with Duality Constraint \label{alg:algor1}}
	\begin{description}
		\item[Initialization:]
		Given a noisy function $\tilde{f}(\p)$, $\p\in\mathbb{R}^{n_u}$, with the gradient Lipschitz constant $L$, and noise bounded by $\delta$; Select the initial iterate $\p_0\in\mathbb{R}^{n_u}$. Set the iteration counter and the function evaluation counter to zero,   $k\leftarrow 0$ and $k_f\leftarrow 0$, respectively. Select a gradient-error bounding function $E(\p)$. Select the initial Hessian approximation $\vect{H}_0$ and the step size  $h$.
		\item {\bf 1.} Apply FFD at $\p_0$ by perturbing $\p_0$ in each input direction using the step size $h$. This results in the initial interpolation set $\mathcal{U}_0 = \{\p_0,\p_{-1},\dots,\p_{-{n_u}}\}$.
		\item {\bf 2.} Evaluate $\tilde{f}(\p)$ at $\p_0,\p_{-1},\dots,\p_{-{n_u}}$. Set $k_f = n_u+1$. 
		\item {\bf while} a convergence test is not satisfied {\bf do} 
		
		\item {\bf 3.} $\quad$ Define the interpolation set as $\mathcal{U}_k = \{\p_k,\p_{k-1},\dots,\p_{k-n_u}\}$.
		\item {\bf 4.} $\quad$ Define the model $m_k(\p)$ using \eqref{eq:mk}-\eqref{eq:yqk}.
		\item {\bf 5.} $\quad$ Determine the dividing hyperplane $\mathcal{H}_k$.
		\item {\bf 6.} $\quad$ Select the upper bound on the gradient-error norm $E_k^U$. 
		\item {\bf 7.} $\quad$ Solve Problems~\eqref{eq:optim1} and \eqref{eq:optim2}, and compute the next input $\p_{k+1}$ from \eqref{eq:minimizer}.
		\item {\bf 8.} $\quad$ Evaluate $\tilde{f}(\p_{k+1})$. Set $k_f=k_f+1$.
		\item {\bf 9.} $\quad$ Determine a new Hessian approximation $\vect{H}_{k+1}$.
		\item {\bf 10.} $\quad$ $k=k+1$.
		\item {\bf end while}
	\end{description}
\end{algorithm} 

In Examples~\ref{ex:casestudy1} and \ref{ex:casestudy2} below, we will consider two variants of Algorithm~\ref{alg:algor1} depending on the choice of the duality constraint. In selecting the upper bound $E_k^U$, it is very convenient to consider the minimum error bound for FFD using the radial bound and the l-min bound, which is given by
\begin{align}
	& E_{FFD}^\star := \frac{L\sqrt{n_u}\ h^\star}{2} + \frac{2\delta\sqrt{n_u}}{h^\star}, \quad\text{with}\ h^\star = 2\sqrt{\delta/L}. \label{eq:EFFSstar}
\end{align}

The two variants are defined by the following choices:\\

\noindent {\bf Algorithm 1a}
\begin{itemize}
	\item $E_k(\p) = E_{r,k}(\p) = T_{r,k}(\p) + N_{l,k}(\p)$.
	\item $E_k^U = \max\left\{\frac{\|\vect{g}_k\|}{4},\ E_{FFD}^\star\right\}$.
	\item $\vect{H}_k = L\vect{I}_{n_u}$.
\end{itemize}

\noindent {\bf Algorithm 1b}
\begin{itemize}
	\item $E_k(\p) = E_{s,k}(\p) = T_{s,k}(\p) + N_{l,k}(\p)$.
	\item $E_k^U = E_{FFD}^\star$.
	\item $\vect{H}_k = L\vect{I}_{n_u}$.
\end{itemize}

Algorithm 1a uses the radial bound. The gradient-error bound $E_k^U$ is selected as a fraction of the norm of the simplex gradient $\vect{g}_k$, and takes $E_{FFD}^\star$ as a minimum value. Algorithm 1b uses the simplex bound, and $E_k^U$ is kept constant and equal to $E_{FFD}^\star$. Both algorithms use the l-min bound to deal with measurement noise. As discussed earlier, the conservative Hessian $\vect{H}_k = L\vect{I}_{n_u}$ is adopted.

\begin{Example} \label{ex:casestudy1}
	Consider the function
	\begin{align}
		& f(\p) = 2\pk_1^2 - \pk_1\pk_2 + \pk_2^2 - 3\pk_1 + 1.4^{(2\pk_1+\pk_2)}, \nonumber
	\end{align}
	with $\p = \tran{[\pk_1,\ \pk_2]}$. A gradient Lipschitz constant that is valid in $\mathcal{Q} = \{\p\in\mathbb{R}^{2}:\ -2\leq \pk_1\leq 1,\ -2.5\leq\pk_2\leq 1\}$ is $L = 5.3$. We assume that the evaluations of $f$ are noisy, $\tilde{f}(\p) = f(\p)+\nu$, and the noise $\nu$ has a Gaussian distribution with standard deviation $\sigma_f=0.1$. The noise interval is selected as $\delta = 3\sigma_f=0.3$. Let us consider the two initial points $\p_0^a = \tran{[-2,\ -2.5]}$ and $\p_0^b = \tran{[-2,\ 0.5]}$. The first 60 iterates obtained using Algorithm~1a, which employs the radial bound, are shown in the left plot of Figure~\ref{fig:Ejemplo_2inputs}. 
	Both initial points lead to convergence to a neighborhood of the optimal solution. Similarly, the right plot in Figure~\ref{fig:Ejemplo_2inputs} displays the first 60 iterates of Algorithm~1b, which uses the simplex bound. 
	Due to this bound, Algorithm~1b allows for less restrictive movements and takes larger initial steps toward the optimum. Specifically, starting from point $\p_0^a$, Algorithm~1b reaches a neighborhood of the optimal solution in a single iteration, whereas Algorithm~1a requires five iterations. From point $\p_0^b$, both algorithms reach the neighborhood of the optimum in approximately nine iterations, but Algorithm~1b takes a larger initial step in that direction.
\end{Example}

\begin{figure}[h]
	
	\centering 
	
	\includegraphics[width=12cm]{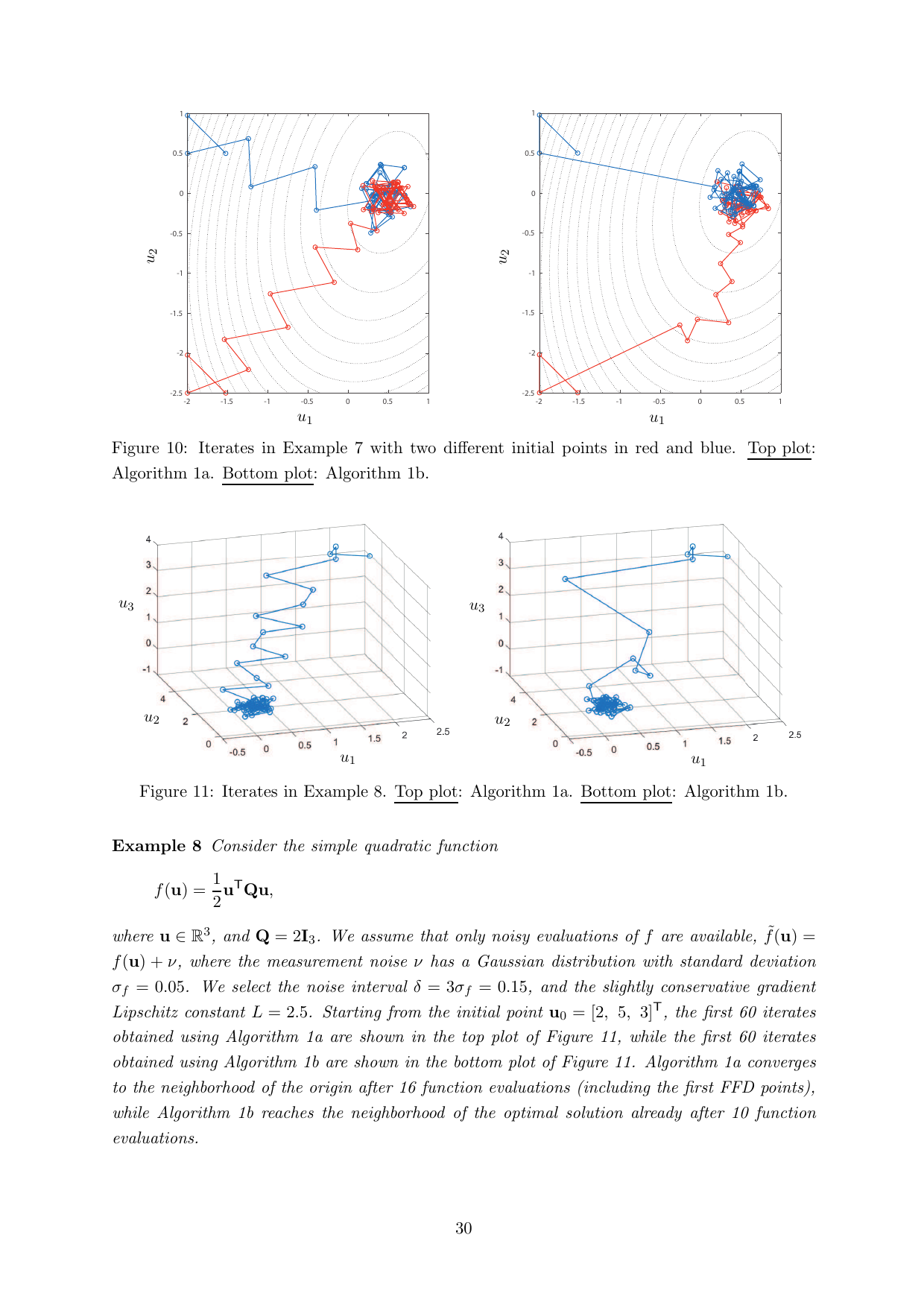} 
	
	\caption{Iterates in Example~\ref{ex:casestudy1} with two different initial points in red and blue. \underline{Left plot}: Algorithm~1a. \underline{Right plot}: Algorithm~1b.} 
	\label{fig:Ejemplo_2inputs}
\end{figure}

\begin{Example}  \label{ex:casestudy2}
	Consider the simple quadratic function
	\begin{align}
		& f(\p) = \frac{1}{2}\tran{\p}\vect{Q}\p, \nonumber
	\end{align}
	where $\p\in\mathbb{R}^3$, and $\vect{Q} = 2\vect{I}_3$. We assume that only noisy evaluations of $f$ are available, $\tilde{f}(\p) = f(\p)+\nu$, where the measurement noise $\nu$ has a Gaussian distribution with standard deviation $\sigma_f=0.05$. We select the noise interval $\delta = 3\sigma_f=0.15$, and the slightly conservative gradient Lipschitz constant $L=2.5$. Starting from the initial point $\p_0 = \tran{[2,\ 5,\ 3]}$, the first 60 iterates obtained using Algorithm~1a are shown in the top plot of Figure~\ref{fig:Ejemplo_3inputs}, while the first 60 iterates obtained using Algorithm~1b are shown in the bottom plot of Figure~\ref{fig:Ejemplo_3inputs}. Algorithm~1a converges to the neighborhood of the origin after 13 iterations (excluding the four initial FFD points), while Algorithm~1b reaches the neighborhood of the optimal solution already after 7 iterations.
\end{Example}

\begin{figure}[tbh]
	
	\centering 
	\includegraphics[width=15cm]{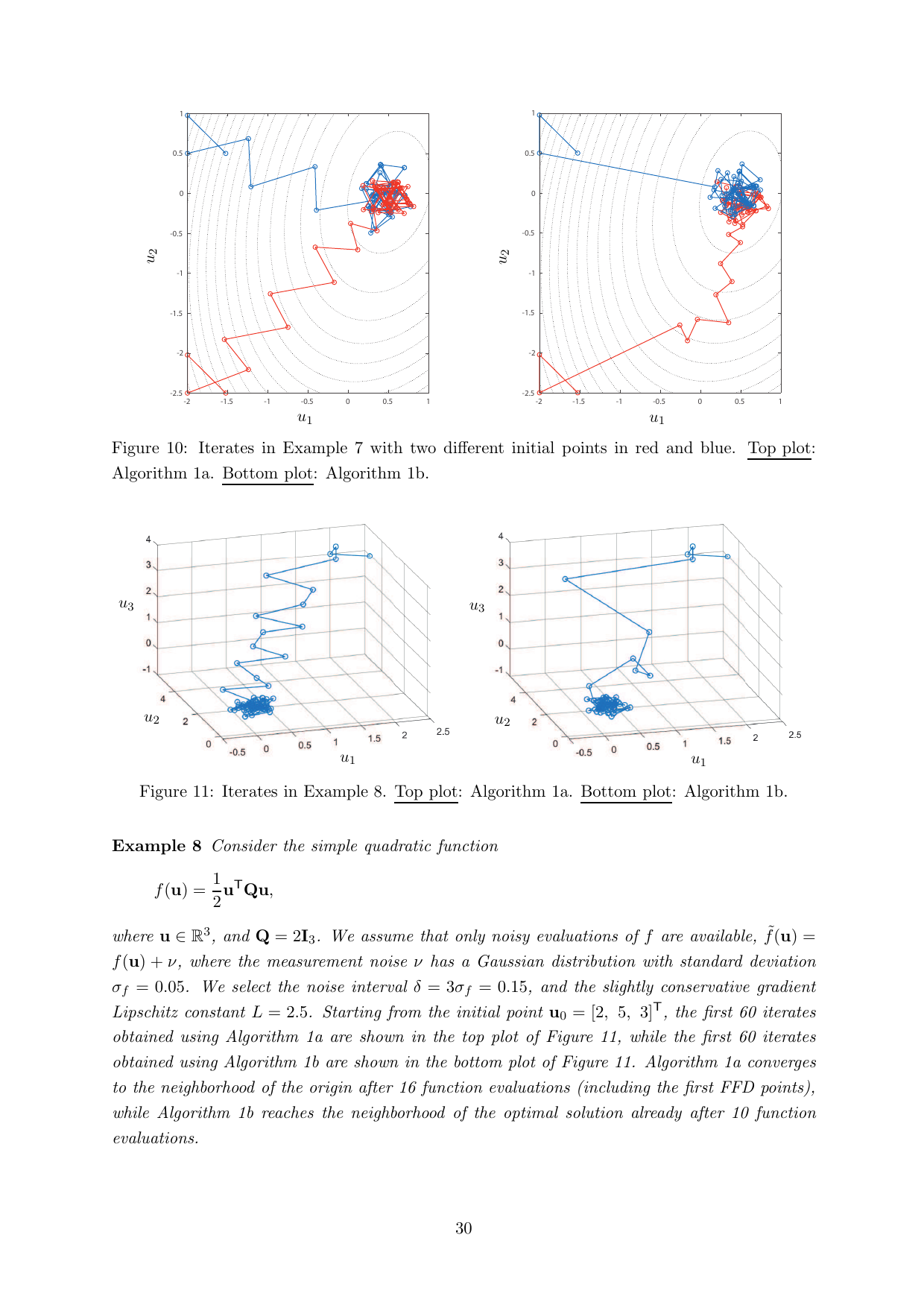} 
	
	\caption{Iterates in Example~\ref{ex:casestudy2}. \underline{Left plot}: Algorithm~1a. \underline{Right plot}: Algorithm~1b.} 
	\label{fig:Ejemplo_3inputs}
\end{figure} 

Note that the shape and size of the feasible regions generated by the duality constraint at each iteration depends on the position of the points $\{\p_k,\p_{k-1},\dots,\p_{k-n_u+1}\}$. The duality constraint ensures the accuracy of the simplex gradient evaluated at the next iteration.

The rationale behind defining the simplex bound becomes evident from the results.
Notably, equation~\eqref{eq:m_general} implies that the simplex gradient approximates the gradient at all points in the interpolation sample set $\mathcal{U}$, while Remark~\ref{rmk:radialuniform} indicates that the radial bound takes the same value at all points in $\mathcal{U}$. This means that as we approach a stationary point, every point in the interpolation sample set serves as an equally valid estimate of that stationary point. Consequently, convergence occurs with a final simplex rather than a final iterate.

By using the simplex bound, the focus shifts from ensuring gradient accuracy at the last iterate to maintaining accuracy at any point within the last simplex. The simplex bound is considerably less conservative than the radial bound, allowing the duality constraint to define larger feasible regions for placing the next point. As a result, Algorithm 1b permits less restrictive movements than Algorithm 1a, enabling both larger and smaller steps. This often leads to faster convergence to a neighborhood of the plant optimum, though not always. For example, in the left plot of Figure 10, starting from $(-2,\ -2.5)$, Algorithm 1b initially takes a large step but then switches to smaller steps than Algorithm 1a, likely due to encountering a region with a gentler slope.

\section{Conclusions}
\label{sec:Conclusions}

We have examined the accuracy of the simplex gradient obtained via linear interpolation of $n_u+1$ points for functions subject to uniformly bounded noise. Various upper bounds on the gradient error, arising from both truncation and measurement noise, have been derived and systematically compared.

An important contribution of this work is the square column bound, a novel upper bound on the norm of the gradient error due to truncation. This bound is shown to be less conservative than the commonly used delta bound, with both bounds coinciding when the columns of the matrix $\vect{U}$ are of equal length. As demonstrated in Example~\ref{ex:3}, the delta bound can become significantly more conservative than the square column bound when the domain dimension $n_u$ increases and the columns of $\vect{U}$ differ in norm.

We also provide new theoretical results for the radial bound, showing that it upper bounds the projection of the gradient error along the directions defined by the columns of $\vect{U}$, and becomes a strict upper bound on the full gradient error norm when these directions are orthogonal, even if not normalized. 

Examples~\ref{ex:1}–\ref{ex:3} and \ref{ex:Ex5} highlight the excessive conservatism of strict bounds like the delta and square column bounds in many cases. In contrast, while the radial bound does not guarantee a bound in all scenarios, it has desirable structural properties and frequently provides a tight and less conservative approximation of the gradient error. For these reasons, we classify the radial bound as an approximate gradient error bound. We also define the extended radial bound for non-vertex points within the simplex, and the simplex bound as the minimum value that the extended radial bound attains inside the simplex.

Both the square column and radial bounds are simple to compute and are directly applicable to assessing gradient accuracy in DFO schemes for noise-free functions. However, all bounds that rely on a gradient Lipschitz constant may become overly conservative—particularly for functions with highly variable Hessians—and, since this constant is often unknown in practice, effective data-driven estimation strategies remain essential.

For gradient errors due to measurement noise, Example~\ref{ex:noiseerror} confirms that the conditioning bound is notably more conservative than the l-min bound, especially when the sample points are closely spaced. While computing the l-min bound involves evaluating distances between complementary affine subspaces—a number that grows rapidly with $n_u$—the computational burden remains manageable in many real-world applications, where the number of independent adjustable variables is typically modest (2 to 10). Still, the l-min bound represents a worst-case scenario, which may be overly pessimistic in practice, particularly for higher-dimensional problems. Developing gradient-error bounds that are both computationally efficient and less conservative than the l-min bound is an important avenue for future research.

We also introduced a sequential programming DFO scheme that incorporates approximate gradient error bounds as duality constraints for unconstrained optimization of noisy functions. The results from Examples~\ref{ex:casestudy1} and \ref{ex:casestudy2} demonstrate the effectiveness of this strategy. By ensuring gradient accuracy at at least one point in the last simplex, the simplex bound allows for significantly less conservative duality constraints compared to the radial bound. This leads to larger permissible step sizes and faster convergence to a neighborhood of the plant optimum, without compromising gradient accuracy. 

Several promising directions for future research emerge from this study. The square column bound, radial bound, l-min bound, extended radial bound, and simplex bound remain largely unexplored in the DFO literature. In particular, the square column and radial bounds could offer valuable alternatives to the delta bound for certifying model quality in derivative-free trust-region methods. Further research could also extend these bounds to the underdetermined and overdetermined settings. Moreover, the concept of duality constraints appears especially promising for DFO under noisy conditions. Extending Algorithm~1 to handle constrained optimization problems and developing convergence guarantees toward a neighborhood of the optimal solution are key topics for future investigation.


\color{black}

\appendix
\renewcommand{\theequation}{\thesection.\arabic{equation}}

\section{Complement Affine Subspaces} \label{app:affine_subspaces}

In a $\np$-dimensional space, a point is an affine subspace of dimension 0, a line is an affine subspace of dimension 1, and a plane is an affine subspace of dimension 2. An affine subspace of dimension $(\np-1)$ is called an hyperplane.\\

{\it Hyperplane.}
An hyperplane in $\np$-dimensional space is given by
\begin{align}
	& n_1u_1 + n_2u_2 + \dots + n_{\np}u_{\np}=b,  \quad \text{or:} \quad \tran{\en}\p=b \label{equ:hyper}
\end{align}
and divides the space into two half-spaces: $\tran{\en}\p \geq b$, and $\tran{\en}\p \leq b$.\\

\begin{Definition}
	{\bf (Complement affine subspaces)}
	Given a set of $(\np+1)$ points in a $\np$-dimensional space, $\mathcal{U} := \{\p_0, \p_1, \dots,\p_{\np}\}$, a proper subset $\mathcal{U}_A$ of $\np^A \in \{1,\dots,n_u\}$ points determines an affine subspace of dimension $(\np^A-1)$.
	The complement subset $\mathcal{U}_C := \mathcal{U}\setminus\mathcal{U}^A$ of $(\np+1-\np^A)$ points, determines the complement affine subspace of dimension $(\np-\np^A)$.
\end{Definition}

\begin{Definition}
	{\bf (Distance between complement affine subspaces)}
	Given a set of $(\np+1)$ points in a $\np$-dimensional space, $\mathcal{U} := \{\p_0, \p_1,\dots,\p_{\np}\}$, a proper subset of $\mathcal{U}$, $\mathcal{U}_A \subsetneq \mathcal{U}$ of $\np^A \in \{1,\dots,\np\}$ points, and its complement $\mathcal{U}_C := \mathcal{U}\setminus\mathcal{U}_A$ of $(\np+1-\np^A)$ points, the distance between complement affine subspaces is defined as the (orthogonal) distance between the affine subspace of dimension $(\np^A-1)$ generated by all the points in $\mathcal{U}_A$, and the affine subspace of dimension $(\np-\np^A)$ generated by all the points in $\mathcal{U}_C$. 
\end{Definition}

The total number of possible pairs of complement affine subspaces that can be generated from $\mathcal{U}$ is 
\begin{align}
	& n_b=1+\sum_{s=1}^{\np-1}2^s. \nonumber
\end{align}

Table~\ref{tab:numb} gives the value of $n_b$ for values of $n_u$ from 1 to 10.
\begin{table}[h]
	\caption{Number of complement affine subspaces} 
	\centering 
	\begin{tabular}{c cccccccccc} 
		\hline 
		$n_u$ & 1 & 2 & 3 & 4 & 5 & 6 & 7 & 8 & 9 & 10 \\ 
		$n_b$ & 1 & 3 & 7 & 15 & 31 & 63 & 127 & 255 & 511 & 1023\\ 
		\hline 
	\end{tabular}
	\label{tab:numb}
\end{table}

In the 2-dimensional case ($\np=2$), the 3 distances to evaluate correspond to the 3 point-to-line distances. In the 3-dimensional case, there are $n_b=7$ distances to evaluate, which correspond to 4 point-to-plane distances, and 3 line-to-line distances. \\

In order to compute the distance between the complement affine subspaces determined by the points in $\mathcal{U}_A$ and $\mathcal{U}_C$, a vector $\en$ that is normal to both subspaces is required:
\begin{align} 
	&\tran{[\ \p_1-\p_0,\ \dots\ , \ \p_{\np^A-1}-\p_0,\ \ \p_{\np^A+1}-\p_{\np^A},\ \dots\ , \p_{\np}-\p_{\np^A} \ ]} \en = \vect{0}, \label{equ:calcnormal}
\end{align}
or
\begin{align}
	& \vect{Q} \en = \vect{0}. \nonumber
\end{align}

	The matrix $\vect{Q} \in\mathbb{R}^{(\np-1)\times\np}$ is of rank $(\np-1)$. The vector $\en$ can be obtained by singular-value decomposition of $\vect{Q}$.
	
	Given a point $\p^a\in\mathcal{U}_A$, a point $\p^b\in\mathcal{U}_C$, and a vector $\en$ that is normal to both complement affine subspaces, the distance $l_{AC}$ between the complement affine subspaces is:
	\begin{align}
		l_{AC} = \frac{\lvert\tran{\en}(\p^b-\p^a)\rvert}{\| \en \|} \label{equ:calcdist}
	\end{align}

\begin{Definition}
	{\bf (Nearest Complement Affine Subspaces)}
	The shortest distance between complement affine subspaces is given by $l_{\text{\rm min}}:=\text{\rm min}\{l_1,l_2,\dots,l_{n_b}\}$, where $l_1,l_2,\dots,l_{n_b}$ are the distances between all possible pairs of complement affine subspaces that can be generated from $\mathcal{U}$.
\end{Definition}


\bibliographystyle{plainnat}

\bibliography{Marchetti_2025}  

\end{document}